\newtheorem{theorem}{Theorem}
\newtheorem{corollary}[theorem]{Corollary}
\newtheorem{definition}[theorem]{Definition}
\newtheorem{lemma}[theorem]{Lemma}
\newenvironment{proof}[1][Proof]{\noindent\textbf{#1.} }{\ \rule{0.5em}{0.5em}}
\begin{document}

\title{Matrix Representation of Bi-Periodic Jacobsthal Sequence }
\author{Sukran Uygun\thanks{%
e-mail: suygun@gantep.edu.tr}, Evans Owusu\thanks{%
eo43241@mail2.gantep.edu.tr/owusuevans14@gmail.com} \\
Department of Mathematics - Faculty of Science and Arts,\\
Gaziantep University, 27310, Gaziantep, Turkey}
\maketitle

\begin{abstract}
In this paper, we bring into light the matrix representation of bi-periodic
Jacobsthal sequence, which we shall call the bi-periodic Jacobsthal Matrix
sequence. We define it as 
\begin{equation*}
\ \ \text{ \ }J_{n}=\left\{ 
\begin{array}{c}
bJ_{n-1}+2J_{n-2},\text{\ \ \ \ if }n\text{ is even} \\ 
aJ_{n-1}+2J_{n-2},\text{\ \ \ \ if }n\text{ is odd\ }%
\end{array}%
\right. \text{\ \ }n\geq 2,
\end{equation*}%
with initial conditions $\ J_{0}=I$ \ identity matrix $,\ J_{1}=\left( 
\begin{array}{cc}
b & 2\frac{b}{a} \\ 
1 & 0%
\end{array}%
\right) $.

We obtained the nth general term of this new matrix sequence. By studying
the properties of this new matrix sequence, the well-known Cassini or
Simpson's formula was obtained. We then proceeded to find its generating
function as well as the Binet formula. Some new properties and two summation
formulas for this new generalized matrix sequence are also given.

\textit{Keywords: Bi-periodic }Jacobsthal sequence; Generating function;
Binet formula.

\textit{2010 MSC:} 11B39, 15A24, 11B83, 15B36.
\end{abstract}

\section{Introduction}

The increasing applications of integer sequences such as Fibonacci, Lucas,
Jacobsthal, Jacobsthal Lucas, Pell etc in the various fields of science and
arts can not be overemphasized. For example, the ratio of two consecutive
Fibonacci numbers converges to what is widely known as the Golden ratio
whose applications appear in many research areas, particularly in Physics,
Engineering, Architecture, Nature and Art. Horadam[1]

The same can easily be said for Jacobsthal sequence. For instance, it is
known that Microcontrollers and other computers change the flow of execution
of a program using conditional instructions. Along with branch instructions,
some microcontrollers use skip instructions which conditionally bypass the
next instruction which boil down to being useful for one case out of the
four possibilities on 2 bits, 3 cases on 3 bits, 5 cases on 4 bits, 11 on 5
bits, 21 on 6 bits, 43 on 7 bits, 85 on 8 bits \ and continue in that order,
which are exactly the Jacobsthal numbers [14].

Now, The classical Jacobsthal sequence $\left\{ j_{n}\right\} _{n=0}^{\infty
}$ which was named after the German mathematician Ernst Jacobsthal is
defined recursively by the relation $j_{n}=$ $j_{n-1}+2j_{n-2}$ with initial
conditions $j_{0}=0,$ $j_{1}=1.$ The other related sequence is the
Jacobsthal Lucas sequence $\left\{ c_{n}\right\} _{n=0}^{\infty }$ which
satisfies the same recurrence relation, that is $c_{n}=c_{n-1}+2c_{n-2}$ but
with different initial conditions $c_{0}=2,$ $c_{1}=1.$ Applications of
these two sequences to curves can be found in [13]

There are many generalization in literature on the above well-known integer
sequences many of which can be found in our references. For example, in
[6,7], Edson and Yayenie defined the bi-periodic Fibonacci sequence as%
\begin{equation*}
q_{n}=\left\{ 
\begin{array}{c}
aq_{n-1}+q_{n-2},\text{ \ if }n\text{ is even} \\ 
bq_{n-1}+q_{n-2},\text{\ if }n\text{ is odd\ \ }%
\end{array}%
\right. \ \ n\geq 2
\end{equation*}%
with initial conditions $q_{0}=0,q_{1}=1$. After this, Bilgici [8] defined
the bi-periodic Lucas sequence as%
\begin{equation*}
l_{n}=\left\{ 
\begin{array}{c}
al_{n-1}+l_{n-2},\text{ \ if }n\text{ is odd } \\ 
bl_{n-1}+l_{n-2},\text{\ \ if }n\text{ is even}%
\end{array}%
\right. \ \ n\geq 2
\end{equation*}%
with initial conditions $l_{0}=2,l_{1}=a$. Bilgici also extablished some
relationships between the bi-periodic Fibonacci and Lucas numbers.

In [12 ], we defined the bi-periodic Jacobsthal sequence as%
\begin{equation*}
\hat{\jmath}_{n}=\left\{ 
\begin{array}{c}
a\hat{\jmath}_{n-1}+2\hat{\jmath}_{n-2},\text{ \ if }n\text{ is even} \\ 
b\hat{\jmath}_{n-1}+2\hat{\jmath}_{n-2},\text{\ \ if }n\text{ is odd\ }%
\end{array}%
\right. \text{\ \ \ }n\geq 2,
\end{equation*}%
\qquad\ \qquad

with initial conditions$\ \hat{\jmath}_{0}=0,\ \hat{\jmath}_{1}=1$. In [15
], we also brought into light bi-periodic Jacobsthal Lucas sequence $\left\{
C_{n}\right\} _{n=0}^{\infty }$ as%
\begin{equation*}
C_{n}=\left\{ 
\begin{array}{c}
bC_{n-1}+2C_{n-2},\text{\ if }n\text{ is even} \\ 
aC_{n-1}+2C_{n-2},\text{\ if }n\text{ is odd}%
\end{array}%
\right. \text{\ \ }n\geq 2.
\end{equation*}%
\qquad with initial conditions $C_{0}=2,\ C_{1}=a.$ The direct relationship
between the bi-periodic Jacobsthal and the bi-periodic Jacobsthal Lucas
sequences were obtained as $C_{n}=2\hat{\jmath}_{n-1}+\hat{\jmath}_{n+1}$ $\
\ \ \ $and $\ \ \left( ab+8\right) \hat{\jmath}_{n}=2C_{n-1}+C_{n+1}.$

In [16], Coskun and Taskara defined the bi-periodic Fibonacci matrix
sequence as%
\begin{equation*}
\ \text{\ }F_{n}\left( a,b\right) =\left\{ 
\begin{array}{c}
aF_{n-1}\left( a,b\right) +2F_{n-2}\left( a,b\right) ,\text{ \ if }n\text{
is even} \\ 
bF_{n-1}\left( a,b\right) +2F_{n-2}\left( a,b\right) ,\text{\ \ if }n\text{
is odd\ }%
\end{array}%
\right. \text{\ \ \ }n\geq 2,
\end{equation*}

with the initial conditions given as%
\begin{equation*}
\ F_{0}\left( a,b\right) =\left( 
\begin{array}{cc}
1 & 0 \\ 
0 & 1%
\end{array}%
\right) ,\text{ }F_{1}\left( a,b\right) =\left( 
\begin{array}{cc}
b & \frac{b}{a} \\ 
1 & 0%
\end{array}%
\right) .
\end{equation*}

They then obtained the nth general term of this matrix sequence as%
\begin{equation*}
F_{n}=\left( 
\begin{array}{cc}
\left( \frac{b}{a}\right) ^{\varepsilon (n)}q_{n+1} & \frac{b}{a}q_{n} \\ 
q_{n} & \left( \frac{b}{a}\right) ^{\varepsilon (n)}q_{n-1}%
\end{array}%
\right) ,
\end{equation*}

where $\varepsilon (m)$ is the parity function which is defined as%
\begin{equation*}
\epsilon (m)=\left\{ 
\begin{array}{c}
0,\text{ \ if }m\text{ is even} \\ 
1,\text{ \ if }m\text{ is odd}%
\end{array}%
\right.
\end{equation*}

In addiction, the authors obtained the binet formula for this sequence as%
\begin{equation*}
J_{n}=A\left( \alpha ^{n}-\beta ^{n}\right) +B\left( \alpha ^{2\left\lfloor 
\frac{n}{2}\right\rfloor +2}-\beta ^{2\left\lfloor \frac{n}{2}\right\rfloor
+2}\right)
\end{equation*}

where%
\begin{equation*}
A=\frac{\left( F_{1}\left( a,b\right) -bF_{0}\left( a,b\right) \right)
^{\epsilon (n)}\left( aF_{1}\left( a,b\right) -F_{0}\left( a,b\right)
-abF_{0}\left( a,b\right) \right) ^{1-\epsilon (n)}}{\left( ab\right)
^{\left\lfloor \frac{n}{2}\right\rfloor }\left( \alpha -\beta \right) }\text{
and\ }B=\text{\ }\frac{b^{\epsilon (n)}F_{0}\left( a,b\right) }{\left(
ab\right) ^{\left\lfloor \frac{n}{2}\right\rfloor +1}\left( \alpha -\beta
\right) }
\end{equation*}

and $\alpha =\frac{ab+\sqrt{a^{2}b^{2}+8ab}}{2}$ and $\beta =\frac{ab-\sqrt{%
a^{2}b^{2}+8ab}}{2}$ are the roots of the characteristic equation $%
x^{2}-abx-2ab=0.$ Using the Binet formula, some summations for the
bi-periodic Fibonnacci matrix sequence were also given.

In the same way, In [11] Coskun, Yilmaz and Taskara defined the bi-periodic
Lucas matrix sequence as%
\begin{equation}
\ \text{\ }L_{n}\left( a,b\right) =\left\{ 
\begin{array}{c}
aL_{n-1}\left( a,b\right) +2L_{n-2}\left( a,b\right) ,\text{ \ \ \ \ \ \ \ \
\ \ \ \ if }n\text{ is even} \\ 
bL_{n-1}\left( a,b\right) +2L_{n-2}\left( a,b\right) ,\text{\ \ \ \ \ \ \ \
\ \ \ \ if }n\text{ is odd}%
\end{array}%
\right. \text{\ \ \ }n\geq 2,  \tag{1}
\end{equation}

with the initial conditions given as%
\begin{equation*}
\ L_{0}\left( a,b\right) =\left( 
\begin{array}{cc}
a & 2 \\ 
2\frac{a}{b} & -a%
\end{array}%
\right) ,\text{ }L_{1}\left( a,b\right) =\left( 
\begin{array}{cc}
a^{2}+2\frac{a}{b} & a \\ 
\frac{a^{2}}{b} & 2\frac{a}{b}%
\end{array}%
\right) .
\end{equation*}

They then obtained the nth general term of this matrix sequence as%
\begin{equation*}
L_{n}\left( a,b\right) =\left( 
\begin{array}{cc}
\left( \frac{a}{b}\right) ^{\varepsilon (n)}l_{n+1} & l_{n} \\ 
\frac{a}{b}l_{n} & \left( \frac{a}{b}\right) ^{\varepsilon (n)}l_{n-1}%
\end{array}%
\right) ,
\end{equation*}

where $\varepsilon (m)$ is the parity function which is defined as before.

In addiction, the authors obtained the binet formula for this sequence as

\begin{equation*}
L_{n}=A\alpha ^{n}+B\beta ^{n}
\end{equation*}

where

\begin{equation*}
A=\frac{bL_{1}\left( a,b\right) -\alpha L_{0}\left( a,b\right)
-abL_{0}\left( a,b\right) }{b^{\varepsilon (n)}\left( ab\right)
^{\left\lfloor \frac{n}{2}\right\rfloor }\left( \alpha -\beta \right) }\text{
\ \ \ and \ }B=\text{\ }\frac{bL_{1}\left( a,b\right) -\beta L_{0}\left(
a,b\right) -abL_{0}\left( a,b\right) }{b^{\varepsilon (n)}\left( ab\right)
^{\left\lfloor \frac{n}{2}\right\rfloor }\left( \alpha -\beta \right) }
\end{equation*}

In this paper, as first in literature, we bring into light the matrix
representation of bi-periodic Jacobsthal sequence, which we shall call the
bi-periodic Jacobsthal Matrix sequence. We will then proceed to obtain the
nth general term of this new matrix sequence. By studying the algebraic
properties of this new matrix sequence, the well-known Cassini or Simpson's
formula would be given. The generating function together with the Binet
formula are also given. Some new \ properties as well as some summation
formulas for this new generalized matrix sequence are also given.

\begin{definition}
For any two non-zero real numbers $a$ and $b$, and any number $n$ belonging
to the set of natural numbers, the bi-periodic Jacobsthal \ matrix sequence
denoted by $J_{n}\left( a,b\right) $ is defined recursively by
\end{definition}

\begin{equation*}
\ \text{\ }J_{n}\left( a,b\right) =\left\{ 
\begin{array}{c}
aJ_{n-1}\left( a,b\right) +2J_{n-2}\left( a,b\right) ,\text{ \ \ \ \ \ \ \ \
\ \ \ \ if }n\text{ is even} \\ 
bJ_{n-1}\left( a,b\right) +2J_{n-2}\left( a,b\right) ,\text{\ \ \ \ \ \ \ \
\ \ \ \ if }n\text{ is odd}%
\end{array}%
\right. \text{\ \ \ }n\geq 2,
\end{equation*}

with the initial conditions given as

\bigskip 
\begin{equation*}
\ J_{0}\left( a,b\right) =\left( 
\begin{array}{cc}
1 & 0 \\ 
0 & 1%
\end{array}%
\right) ,\text{ }J_{1}\left( a,b\right) =\left( 
\begin{array}{cc}
b & 2\frac{b}{a} \\ 
1 & 0%
\end{array}%
\right) .
\end{equation*}

For the brevity, we shall use $J_{n}$ in place of $J_{n}(a,b).$

\begin{theorem}
For any integer $n\geq 0$, we obtain the $nth$ Jacobsthal matrix sequence as
\end{theorem}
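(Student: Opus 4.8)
The plan is to argue by strong induction on $n$, handling the parity of $n$ in two separate cases, which is the natural strategy for any bi-periodic two-term recurrence. First I would dispose of the base cases. For $n=0$ the asserted closed form must collapse to $J_{0}=I$: this follows from $\hat{\jmath}_{0}=0$, $\hat{\jmath}_{1}=1$ and $\varepsilon(0)=0$ (using, if the $(2,2)$-entry is a multiple of $\hat{\jmath}_{n-1}$ as in the Fibonacci analogue, the back-extended value $\hat{\jmath}_{-1}=\tfrac12$ obtained from the odd-index recurrence $\hat{\jmath}_{1}=b\hat{\jmath}_{0}+2\hat{\jmath}_{-1}$). For $n=1$ the closed form must reproduce the prescribed $J_{1}$ with entries $b,\ 2b/a,\ 1,\ 0$; this is immediate from $\varepsilon(1)=1$, $\hat{\jmath}_{1}=1$ and $\hat{\jmath}_{2}=a$. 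Both are one-line substitutions.

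For the inductive step, fix $n\geq 2$ and assume the formula holds for $J_{n-1}$ and $J_{n-2}$. If $n$ is even, then $n-1$ is odd and $n-2$ is even, so $\varepsilon(n-1)=1$, $\varepsilon(n-2)=0$ and $\varepsilon(n)=0$; substituting the induction hypotheses into $J_{n}=aJ_{n-1}+2J_{n-2}$ and collecting entrywise, the $(2,1)$-entry becomes $a\hat{\jmath}_{n-1}+2\hat{\jmath}_{n-2}=\hat{\jmath}_{n}$ by the even-index scalar recurrence, while the $(1,1)$-entry becomes $a\cdot\tfrac{b}{a}\hat{\jmath}_{n}+2\hat{\jmath}_{n-1}=b\hat{\jmath}_{n}+2\hat{\jmath}_{n-1}=\hat{\jmath}_{n+1}$ by the odd-index scalar recurrence (here $n+1$ is odd), and the other two entries are handled the same way; the desired power $(b/a)^{\varepsilon(n)}=(b/a)^{0}=1$ appears because the leading coefficient $a$ exactly cancels the $b/a$ attached to the odd-parity entries of $J_{n-1}$. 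If $n$ is odd, the mirror-image computation applied to $J_{n}=bJ_{n-1}+2J_{n-2}$, now with $\varepsilon(n-1)=0$, $\varepsilon(n-2)=1$ and $\varepsilon(n)=1$, again yields precisely the claimed matrix; this time the factor $b/a=(b/a)^{\varepsilon(n)}$ is produced directly by the coefficient $b$ hitting the entries of $J_{n-1}$ that carry a lone $1/a$.

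The only genuine difficulty is the bookkeeping: in each parity branch one must keep straight which scalar recurrence, $\hat{\jmath}_{m}=a\hat{\jmath}_{m-1}+2\hat{\jmath}_{m-2}$ for even $m$ or $\hat{\jmath}_{m}=b\hat{\jmath}_{m-1}+2\hat{\jmath}_{m-2}$ for odd $m$, governs each individual matrix entry, and then check that after the scalar coefficient $a$ or $b$ multiplies through, the accumulated power of $b/a$ agrees with $\varepsilon(n)$. Once the four entrywise identities are verified in both cases, the induction closes. An equivalent and arguably cleaner packaging is to observe that the matrix on the right-hand side of the asserted formula satisfies the same bi-periodic two-term recurrence (by exactly the scalar identities used above) together with the same initial data $J_{0}=I$ and the prescribed $J_{1}$; since such a recurrence has a unique solution, the two agree for all $n\geq 0$.
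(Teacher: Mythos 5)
Your proposal is correct and matches the paper's own argument: a two-term induction on $n$ with base cases $n=0,1$ (using $\hat{\jmath}_{-1}=\tfrac12$ and $\hat{\jmath}_{2}=a$), a parity split in the inductive step, and entrywise verification via the scalar bi-periodic recurrences, exactly as the paper does with its $k$/$k+1$ indexing. The closing remark about uniqueness of solutions to the recurrence is just a repackaging of the same induction, so no substantive difference.
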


\begin{equation*}
J_{n}=\left( 
\begin{array}{cc}
\left( \frac{b}{a}\right) ^{\varepsilon (m)}j_{n+1} & 2\frac{b}{a}j_{n} \\ 
j_{n} & 2\left( \frac{b}{a}\right) ^{\varepsilon (m)}j_{n-1}%
\end{array}%
\right) .
\end{equation*}

\begin{proof}
The proof is done by means of mathematical induction. We will start by
noting from the classical Jacobsthal sequence $j_{n}$ as defined in the
introduction that $j_{0}=0,\ j_{1}=1,j_{-1}=\frac{1}{2}\ $and $j_{2}=a.$%
Hence the induction for $n=0$ and $n=1$ are respectively as follows%
\begin{equation}
\text{ }J_{0}=\left( 
\begin{array}{cc}
j_{1} & 2\frac{b}{a}j_{0} \\ 
j_{0} & 2j_{-1}%
\end{array}%
\right) =I  \notag
\end{equation}
\end{proof}

\begin{equation}
J_{1}=\left( 
\begin{array}{cc}
\left( \frac{b}{a}\right) j_{2} & 2\frac{b}{a}j_{1} \\ 
j_{1} & 2j0%
\end{array}%
\right) =\left( 
\begin{array}{cc}
b & 2\frac{b}{a} \\ 
1 & 0%
\end{array}%
\right)  \notag
\end{equation}

We now assume that the equation is true for $n=k$, where $k$ is a positive
integer, that is;

\begin{equation*}
J_{k}=\left( 
\begin{array}{cc}
\left( \frac{b}{a}\right) ^{\varepsilon (k)}j_{k+1} & 2\frac{b}{a}j_{k} \\ 
j_{k} & 2\left( \frac{b}{a}\right) ^{\varepsilon (k)}j_{k-1}%
\end{array}%
\right)
\end{equation*}

We will end the proof by showing that the equation also holds for $n=k+1;$
that is%
\begin{eqnarray*}
\ \text{\ }J_{k+1} &=&\left\{ 
\begin{array}{c}
aJ_{k}+2J_{k-1},\text{ \ \ \ \ \ \ \ \ \ \ \ \ if }k+1\text{ is even} \\ 
bJ_{k}+2J_{k-1},\text{\ \ \ \ \ \ \ \ \ \ \ \ if }k+1\text{ is odd}%
\end{array}%
\right. \ \text{\ } \\
&=&a^{\varepsilon (k)}b^{1-\varepsilon (k)}\left[ \text{\ }J_{k}+2\text{\ }%
J_{k-1}\right] \\
&=&a^{\varepsilon (k)}b^{1-\varepsilon (k)}\left( 
\begin{array}{cc}
\left( \frac{b}{a}\right) ^{\varepsilon (k)}j_{k+1} & 2\frac{b}{a}j_{k} \\ 
j_{k} & 2\left( \frac{b}{a}\right) ^{\varepsilon (k)}j_{k-1}%
\end{array}%
\right) +2\left( 
\begin{array}{cc}
\left( \frac{b}{a}\right) ^{\varepsilon (k)}j_{k} & 2\frac{b}{a}j_{k-1} \\ 
j_{k-1} & 2\left( \frac{b}{a}\right) ^{\varepsilon (k)}j_{k-2}%
\end{array}%
\right) \\
&=&\left\{ 
\begin{array}{c}
\left( 
\begin{array}{cc}
bj_{k+1}+2\frac{b}{a}j_{k}\text{ } & 2\left( \frac{b}{a}\right) ^{2}j_{k}+4%
\frac{b}{a}j_{k-1}\text{ } \\ 
bj_{k}+2j_{k-1}\text{ } & 2bj_{k=1}+4\frac{b}{a}j_{k=2}%
\end{array}%
\text{\ }\right) =\left( 
\begin{array}{cc}
\frac{b}{a}j_{k+2} & 2\frac{b}{a}j_{k+1} \\ 
j_{k+1} & 2\frac{b}{a}j_{k}%
\end{array}%
\right) \text{\ \ }k\text{ even} \\ 
\left( 
\begin{array}{cc}
a\frac{b}{a}j_{k+1}+2j_{k}\text{ } & 2a\frac{b}{a}j_{k}+4\frac{b}{a}j_{k-1}%
\text{ } \\ 
aj_{k}+2j_{k-1}\text{ } & 2a\frac{b}{a}j_{k=1}+4j_{k=2}%
\end{array}%
\text{\ }\right) \text{\ \ \ \ }=\left( 
\begin{array}{cc}
j_{k+2} & 2\frac{b}{a}j_{k+1} \\ 
j_{k+1} & 2j_{k}%
\end{array}%
\right) \text{\ \ \ \ \ \ \ }k\text{ odd}%
\end{array}%
\right. \\
&=&\ \text{\ }\left( 
\begin{array}{cc}
\left( \frac{b}{a}\right) ^{\varepsilon (k+1)}j_{k+2} & 2\frac{b}{a}j_{k+1}
\\ 
j_{k+1} & 2\left( \frac{b}{a}\right) ^{\varepsilon (k)}j_{k}%
\end{array}%
\right) .\text{ \ }
\end{eqnarray*}

\begin{lemma}
For any integer $m\geq 0$, we obtain%
\begin{eqnarray*}
J_{2m} &=&(ab+4)J_{2m-2}-4J_{2m-4}, \\
J_{2m+1} &=&(ab+4)J_{2m-1}-4J_{2m-3}.
\end{eqnarray*}
\end{lemma}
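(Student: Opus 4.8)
The plan is to collapse the two-step bi-periodic recursion into a single homogeneous recurrence with constant coefficients advancing in steps of two. I would handle the even-index and odd-index identities in parallel, since the two computations are mirror images of one another under the swap $a\leftrightarrow b$, and at every step I would only invoke the defining recursion of $J_n$, being careful to record the parity of each index so as to pick the correct branch.

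For the even case, observe that $2m$ is even and $2m-1$ is odd, so the definition gives $J_{2m}=aJ_{2m-1}+2J_{2m-2}$ and $J_{2m-1}=bJ_{2m-2}+2J_{2m-3}$. Substituting the second relation into the first yields $J_{2m}=(ab+2)J_{2m-2}+2aJ_{2m-3}$. Now use that $2m-2$ is even, so $J_{2m-2}=aJ_{2m-3}+2J_{2m-4}$, i.e. $2aJ_{2m-3}=2J_{2m-2}-4J_{2m-4}$. Combining the two displays gives $J_{2m}=(ab+4)J_{2m-2}-4J_{2m-4}$, as claimed.

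For the odd case, $2m+1$ is odd and $2m$ is even, so $J_{2m+1}=bJ_{2m}+2J_{2m-1}$ and $J_{2m}=aJ_{2m-1}+2J_{2m-2}$, which combine to $J_{2m+1}=(ab+2)J_{2m-1}+2bJ_{2m-2}$. Since $2m-1$ is odd, $J_{2m-1}=bJ_{2m-2}+2J_{2m-3}$, hence $2bJ_{2m-2}=2J_{2m-1}-4J_{2m-3}$, and substituting gives $J_{2m+1}=(ab+4)J_{2m-1}-4J_{2m-3}$.

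The only delicate point is the admissible range of $m$: for the manipulations above to be literally meaningful one wants $2m-4\geq 0$, i.e. $m\geq 2$. For $m=0,1$ the statement must be read through the extension of $J_n$ to negative indices (consistent with $j_{-1}=\tfrac12$ used in the proof of the Theorem), or else verified directly from the explicit $n$th-term formula of the Theorem together with the corresponding scalar identities $j_{2m}=(ab+4)j_{2m-2}-4j_{2m-4}$ and $j_{2m+1}=(ab+4)j_{2m-1}-4j_{2m-3}$ for the bi-periodic Jacobsthal numbers (which are proved by the same two-step elimination). I do not expect a genuine obstacle here; the one thing to watch is consistently tracking the parity of each index when reading off which branch of the piecewise recursion applies.
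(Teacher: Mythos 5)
Your argument is correct and is exactly the computation the paper has in mind: the paper's proof is only the one-line remark that the identities follow from the defining recursion, and your two-step substitution (with the parity of each index tracked correctly for the definition in which even indices carry the coefficient $a$ and odd indices the coefficient $b$) supplies precisely those omitted details. Your closing caveat about small $m$ and negative indices is a reasonable extra precaution that the paper itself does not address.
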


\begin{proof}
The proof can easily be obtained by using the above definition of the
bi-periodic Jacobsthal matrix sequence.
\end{proof}

\begin{theorem}
For any positive integer $n$, we have%
\begin{equation*}
\det \left[ J_{n}\right] =2^{n}\left( -\frac{b}{a}\right) ^{\varepsilon (n)}
\end{equation*}
\end{theorem}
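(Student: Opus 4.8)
The plan is to work directly from the closed form for $J_{n}$ obtained in Theorem 2 and to reduce the statement to a Cassini-type identity for the scalar bi-periodic Jacobsthal numbers $j_{n}$. Expanding the $2\times 2$ determinant of
\[
J_{n}=\left(
\begin{array}{cc}
\left( \frac{b}{a}\right) ^{\varepsilon (n)}j_{n+1} & 2\frac{b}{a}j_{n} \\
j_{n} & 2\left( \frac{b}{a}\right) ^{\varepsilon (n)}j_{n-1}
\end{array}
\right)
\]
gives $\det \left[ J_{n}\right] =2\left( \frac{b}{a}\right) ^{2\varepsilon (n)}j_{n+1}j_{n-1}-2\frac{b}{a}j_{n}^{2}$. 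Since $a,b\neq 0$, I may divide by the factor $2\left( \frac{b}{a}\right) ^{\varepsilon (n)}$; using $\left( \frac{b}{a}\right) ^{\varepsilon (n)}\left( \frac{b}{a}\right) ^{1-\varepsilon (n)}=\frac{b}{a}$, the desired equality $\det \left[ J_{n}\right] =2^{n}\left( -\frac{b}{a}\right) ^{\varepsilon (n)}$ becomes equivalent to the scalar identity
\[
\left( \frac{b}{a}\right) ^{\varepsilon (n)}j_{n+1}j_{n-1}-\left( \frac{b}{a}\right) ^{1-\varepsilon (n)}j_{n}^{2}=(-1)^{\varepsilon (n)}\,2^{n-1},\qquad n\geq 1 .
\]

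I would establish this scalar identity by induction on $n$. The cases $n=1$ and $n=2$ are checked directly from $j_{-1}=\frac{1}{2}$, $j_{0}=0$, $j_{1}=1$, $j_{2}=a$, $j_{3}=ab+2$. Denoting the left-hand side by $D_{n}$, the inductive step is precisely the multiplicative recurrence $D_{n}=-2D_{n-1}$, which combined with $D_{1}=-1$ gives $D_{n}=-(-2)^{n-1}=(-1)^{\varepsilon (n)}2^{n-1}$. To obtain $D_{n}=-2D_{n-1}$, substitute the two branches of the defining recurrence, $j_{n+1}=a^{\varepsilon (n)}b^{1-\varepsilon (n)}j_{n}+2j_{n-1}$ and $j_{n}=a^{1-\varepsilon (n)}b^{\varepsilon (n)}j_{n-1}+2j_{n-2}$, into $D_{n}$, expand, and regroup, using $\varepsilon (n-1)=1-\varepsilon (n)$ and the identity $a^{\varepsilon (n)}b^{1-\varepsilon (n)}\cdot a^{1-\varepsilon (n)}b^{\varepsilon (n)}=ab$ to recover a multiple of $D_{n-1}$. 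I expect the one genuinely delicate point to be the bookkeeping with the parity exponents: each power of $b/a$ switches when $n$ is replaced by $n-1$, so one must either treat the cases $n$ even and $n$ odd separately, or carry the computation through symbolically in the exponents $\varepsilon (n)$, $1-\varepsilon (n)$ and simplify only at the end. This Cassini-type relation is of the same kind as the identities recorded for the bi-periodic Jacobsthal sequence in [12], so it could alternatively be quoted from there.

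A shorter route avoids the scalar identity altogether. One checks from the closed form of Theorem 2 that $J_{n}=J_{n-1}S$ when $n$ is odd and $J_{n}=J_{n-1}R$ when $n$ is even, where $S=\left( \begin{array}{cc} b & 2\frac{b}{a} \\ 1 & 0 \end{array}\right) =J_{1}$ and $R=\left( \begin{array}{cc} a & 2 \\ \frac{a}{b} & 0 \end{array}\right)$; each verification is a single $2\times 2$ multiplication that again uses only the two branches of the Jacobsthal recurrence, and one reads off $\det S=-2\frac{b}{a}$, $\det R=-2\frac{a}{b}$, so $\det S\cdot \det R=4$. Taking determinants and telescoping from $\det \left[ J_{0}\right] =1$ then yields $\det \left[ J_{2m}\right] =(\det S\cdot \det R)^{m}=4^{m}=2^{2m}$ and $\det \left[ J_{2m+1}\right] =4^{m}\det S=-2^{2m+1}\frac{b}{a}$, which is exactly $2^{n}\left( -\frac{b}{a}\right) ^{\varepsilon (n)}$ in both parities. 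Either way, the crux of the proof is a short computation built on the alternating recurrence; no deeper input is needed.
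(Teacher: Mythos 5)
Your proposal is correct, but it takes a genuinely different route from the paper. The paper's own ``proof'' of this theorem consists of computing $\det\left[J_{0}\right]$ through $\det\left[J_{6}\right]$ explicitly and then declaring that ``the order continues,'' i.e.\ it records a pattern for six cases and condenses it with the parity function without an inductive step; Cassini's identity is then read off afterwards by comparing this determinant with the entrywise formula for $J_{n}$. You argue in the opposite direction and actually close the logical gap. Your first route reduces $\det\left[J_{n}\right]$, via the $n$th-term formula, to the scalar identity $D_{n}:=\left(\frac{b}{a}\right)^{\varepsilon(n)}j_{n+1}j_{n-1}-\left(\frac{b}{a}\right)^{1-\varepsilon(n)}j_{n}^{2}=(-1)^{\varepsilon(n)}2^{n-1}$, and the inductive step $D_{n}=-2D_{n-1}$ does go through as you predict: substituting $j_{n+1}=a^{\varepsilon(n)}b^{1-\varepsilon(n)}j_{n}+2j_{n-1}$ one gets $\left(\frac{b}{a}\right)^{\varepsilon(n)}a^{\varepsilon(n)}b^{1-\varepsilon(n)}=b$, and the remaining terms cancel because $\left(\frac{b}{a}\right)^{1-\varepsilon(n)}\left(j_{n}-2j_{n-2}\right)=b\,j_{n-1}$; together with $D_{1}=-1$ and $\det\left[J_{n}\right]=2\left(\frac{b}{a}\right)^{\varepsilon(n)}D_{n}$ this yields the claim. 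Be aware, though, that this scalar identity is exactly the Cassini formula the paper later deduces \emph{from} this theorem, so (as you note) you must prove it independently by induction or quote it from [12] to avoid circularity. Your second route is cleaner still and is not in the paper at all: the factorizations $J_{n}=J_{n-1}S$ for $n$ odd and $J_{n}=J_{n-1}R$ for $n$ even, with $S=J_{1}$ and $R=\left(\begin{array}{cc} a & 2 \\ \frac{a}{b} & 0 \end{array}\right)$, do check out against the entrywise formula using the two branches of the recurrence, and since $\det S=-2\frac{b}{a}$, $\det R=-2\frac{a}{b}$, multiplicativity of the determinant telescopes immediately to $2^{n}\left(-\frac{b}{a}\right)^{\varepsilon(n)}$ in both parities. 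What your approaches buy is a complete proof valid for all $n$, which the paper's case-listing does not provide; what the paper's computation buys is only the heuristic discovery of the formula.
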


\begin{proof}
\begin{eqnarray*}
\det \left[ J_{0}\right] &=&\det \left( 
\begin{array}{cc}
1 & 0 \\ 
0 & 1%
\end{array}%
\right) =1. \\
\det \left[ J_{1}\right] &=&\det \left( 
\begin{array}{cc}
b & 2\frac{b}{a} \\ 
1 & 0%
\end{array}%
\right) =-2\frac{b}{a} \\
\det \left[ J_{2}\right] &=&\det \left( 
\begin{array}{cc}
ab+2 & 2b \\ 
a & 2%
\end{array}%
\right) =4 \\
\det \left[ J_{3}\right] &=&\det \left( 
\begin{array}{cc}
a^{2}b+4b & 2b^{2}+4\frac{b}{a} \\ 
ab+2 & 2b%
\end{array}%
\right) =-8\frac{b}{a} \\
\det \left[ J_{4}\right] &=&\det \left( 
\begin{array}{cc}
a^{2}b^{2}+6ab+4 & 2ab^{2}+8b \\ 
a^{2}b+4a & 2ab+4%
\end{array}%
\right) =16 \\
\det \left[ J_{5}\right] &=&\det \left( 
\begin{array}{cc}
a^{2}b^{3}+8ab^{2}+12b & 2ab^{3}+12b^{2}+8\frac{b}{a} \\ 
a^{2}b^{2}+6ab+4 & 2ab^{2}+8b%
\end{array}%
\right) =-32\frac{b}{a} \\
\det \left[ J_{6}\right] &=&\det \left( 
\begin{array}{cc}
a^{3}b^{3}+10a^{2}b^{2}+24ab+8 & 2a^{2}b^{3}+16ab^{2}+24b \\ 
a^{3}b^{2}+8a^{2}b+12a & 2a^{2}b^{2}+12ab+8%
\end{array}%
\right) =64 \\
... &=&\text{ the order continues.}
\end{eqnarray*}
\end{proof}

Therefore the above procedure can be iterated as

\begin{equation*}
\det \left[ J_{n}\right] =\left\{ 
\begin{array}{c}
2^{n}\text{ \ \ \ \ \ \ \ \ \ \ \ \ \ \ \ \ \ if\ }n\text{ even} \\ 
2^{n}\left( -\frac{b}{a}\right) \text{\ \ \ \ \ \ \ if \ }n\text{ odd}%
\end{array}%
\right.
\end{equation*}

which can be condensed using the parity function as

\begin{equation*}
\det \left[ J_{n}\right] =2^{n}\left( -\frac{b}{a}\right) ^{\varepsilon (n)}
\end{equation*}

which completes the proof

\begin{corollary}
$\left( CASSINI^{\prime }S\text{ }IDENTITY\text{ }/SIMPSON^{\prime }S\text{ }%
FORMULA\right) $
\end{corollary}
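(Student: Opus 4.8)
The plan is to read the Simpson/Cassini relation straight off the two independent descriptions of $J_{n}$ that are already at hand: the explicit general term obtained in the first Theorem and the determinant formula just established. First I would compute $\det[J_{n}]$ from the general term. Since
\[
J_{n}=\left(
\begin{array}{cc}
\left( \tfrac{b}{a}\right) ^{\varepsilon (n)}j_{n+1} & 2\tfrac{b}{a}j_{n} \\
j_{n} & 2\left( \tfrac{b}{a}\right) ^{\varepsilon (n)}j_{n-1}
\end{array}
\right) ,
\]
expanding the $2\times 2$ determinant as the product of the diagonal entries minus the product of the off-diagonal ones gives
\[
\det [J_{n}]=2\left( \tfrac{b}{a}\right) ^{2\varepsilon (n)}j_{n+1}j_{n-1}-2\,\tfrac{b}{a}\,j_{n}^{2}.
\]

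Next I would invoke the preceding Theorem, which states $\det [J_{n}]=2^{n}\left( -\tfrac{b}{a}\right) ^{\varepsilon (n)}$. Equating the two expressions for $\det [J_{n}]$ and cancelling the common factor $2$ yields the Cassini/Simpson identity for the bi-periodic Jacobsthal numbers,
\[
\left( \tfrac{b}{a}\right) ^{2\varepsilon (n)}j_{n+1}j_{n-1}-\tfrac{b}{a}\,j_{n}^{2}=2^{\,n-1}\left( -\tfrac{b}{a}\right) ^{\varepsilon (n)},
\]
which may also be recorded in its two parity cases: for $n$ even one gets $j_{n+1}j_{n-1}-\tfrac{b}{a}j_{n}^{2}=2^{\,n-1}$, and for $n$ odd, after multiplying through by $(a/b)^{2}$, one gets $j_{n+1}j_{n-1}-\tfrac{a}{b}j_{n}^{2}=-2^{\,n-1}\tfrac{a}{b}$. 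A quick sanity check at $n=1,2,3$ against $j_{-1}=\tfrac12,\ j_{0}=0,\ j_{1}=1,\ j_{2}=a,\ j_{3}=ab+2$ confirms the formula.

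There is essentially no obstacle here, since both ingredients are already proved; the only point requiring care is the bookkeeping of the exponents of $b/a$ as they pass through the determinant. If the statement is instead meant as a matrix identity of the form $J_{n-1}J_{n+1}-J_{n}^{2}=M_{n}$ for an explicit matrix $M_{n}$, the approach is the same in spirit: multiply out the two $2\times 2$ products entrywise, collapse each entry using the recurrence $j_{n}=j_{n-1}+2j_{n-2}$ together with the scalar identity above, and then merge the $n$ even and $n$ odd computations through the parity function $\varepsilon (n)$, so that the off-diagonal entries of $M_{n}$ carry a single factor $b/a$ and the diagonal entries a factor $(b/a)^{0}$ or $(b/a)^{2}$, mirroring the shape of $J_{n}$ itself. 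In either formulation the determinant Theorem does the substantive work, and what remains is routine.
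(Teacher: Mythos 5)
Your proposal is correct and is essentially the paper's own argument: the paper obtains the identity by equating the determinant of the explicit general term $J_{n}$ with the closed form $\det[J_{n}]=2^{n}\left(-\tfrac{b}{a}\right)^{\varepsilon(n)}$ from the preceding theorem, exactly as you do, and then simplifies by the factor $2$ and a power of $\tfrac{b}{a}$ to reach $\left(\tfrac{b}{a}\right)^{\varepsilon(n)}j_{n-1}j_{n+1}-\left(\tfrac{b}{a}\right)^{1-\varepsilon(n)}j_{n}^{2}=(-1)^{\varepsilon(n)}2^{n-1}$, which is equivalent to your stated form. The intended statement is indeed the scalar identity (not a matrix identity), so your main computation and parity cases settle it.
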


\begin{theorem}
This identity is obtain by a mere comparison of the determinants of the
bi-periodic Jacobsthal matrix sequence with the determinant of its
individual terms condensed together as shown in the immediate theorem above.
Hence our casini's identity is given by
\end{theorem}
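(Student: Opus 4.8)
The plan is to evaluate $\det\left[J_{n}\right]$ in two different ways and equate the outcomes. First I would take the closed form for $J_{n}$ established in Theorem 2,
\[
J_{n}=\left(
\begin{array}{cc}
\left( \frac{b}{a}\right) ^{\varepsilon (n)}j_{n+1} & 2\frac{b}{a}j_{n} \\
j_{n} & 2\left( \frac{b}{a}\right) ^{\varepsilon (n)}j_{n-1}
\end{array}
\right) ,
\]
and expand the $2\times 2$ determinant directly, obtaining
\[
\det \left[ J_{n}\right] =2\left( \frac{b}{a}\right) ^{2\varepsilon (n)}j_{n+1}j_{n-1}-2\frac{b}{a}\,j_{n}^{2}.
\]
On the other hand, Theorem 4 asserts $\det \left[ J_{n}\right] =2^{n}\left( -\frac{b}{a}\right) ^{\varepsilon (n)}$. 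Setting the two expressions equal yields the Cassini/Simpson identity for the bi-periodic Jacobsthal numbers, namely
\[
2\left( \frac{b}{a}\right) ^{2\varepsilon (n)}j_{n+1}j_{n-1}-2\frac{b}{a}\,j_{n}^{2}=2^{n}\left( -\frac{b}{a}\right) ^{\varepsilon (n)}.
\]

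Next I would record the two parity cases separately to display the identity in its cleanest shape. For $n$ even one has $\varepsilon (n)=0$, and the relation reduces to $j_{n+1}j_{n-1}-\frac{b}{a}\,j_{n}^{2}=2^{n-1}$; for $n$ odd one has $\varepsilon (n)=1$, and after dividing through by $2\frac{b}{a}$ it reduces to $\frac{b}{a}\,j_{n+1}j_{n-1}-j_{n}^{2}=-2^{n-1}$. These two cases can then be recombined, by means of the parity function $\varepsilon$, into a single closed expression, which is exactly the statement to be proved. As a sanity check one may verify the formula against the small values $n=0,1,\dots,6$ that were already tabulated in the proof of Theorem 4.

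The argument has no genuine obstacle: once Theorems 2 and 4 are in hand, the identity is simply the observation that a matrix possesses a single well-defined determinant, here computed along its two available descriptions. The one place requiring care is the bookkeeping of the parity exponents: the factor $\left( \frac{b}{a}\right) ^{\varepsilon (n)}$ sits on \emph{both} diagonal entries, so their product contributes $\left( \frac{b}{a}\right) ^{2\varepsilon (n)}$, whereas the product of the two off-diagonal entries contributes only a single factor $\frac{b}{a}$; keeping this straight, and correctly separating the even and odd cases before recombining them, is all that the verification demands.
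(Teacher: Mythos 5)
Your proposal is correct and follows essentially the same route as the paper: equate the determinant of the explicit matrix form of $J_{n}$ from Theorem 2 with the closed formula $\det\left[J_{n}\right]=2^{n}\left(-\frac{b}{a}\right)^{\varepsilon(n)}$ from the preceding theorem, then simplify by parity. The parity bookkeeping and the resulting simplified identity match the paper's statement exactly.
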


\begin{equation*}
2\left( \frac{b}{a}\right) ^{2\varepsilon (m)}j_{n-1}j_{n+1}-2\frac{b}{a}%
j_{n}^{2}=2^{n}\left( -\frac{b}{a}\right) ^{\varepsilon (n)}
\end{equation*}

which can be simplified as

\begin{equation*}
\left( \frac{b}{a}\right) ^{\varepsilon (m)}j_{n-1}j_{n+1}-\left( \frac{b}{a}%
\right) ^{1-\varepsilon (m)}j_{n}^{2}=\left( -1\right) ^{\varepsilon
(n)}2^{n-1}
\end{equation*}

\begin{theorem}
$\left( GENERATING\text{ }FUNCTION\right) $

The generating function for the bi-periodic Jacobsthal matrix sequence is
obtained as

\begin{equation*}
\underset{m=0}{\overset{\infty }{\sum }}J_{m}x^{m}=\frac{J_{0}+J_{1}x+\left[
aJ_{1}-\left( ab+2\right) J_{0}\right] x^{2}+\left[ 2bJ_{0}-2J_{1}\right]
x^{3}}{1-(ab+4)x^{2}+4x^{4}}.
\end{equation*}
\end{theorem}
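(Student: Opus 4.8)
The plan is to compute the formal power series $G(x) = \sum_{m=0}^{\infty} J_m x^m$ directly from the recurrence, exploiting the two-step (even/odd) structure captured by Lemma (the one stating $J_{2m} = (ab+4)J_{2m-2} - 4J_{2m-4}$ and $J_{2m+1} = (ab+4)J_{2m-1} - 4J_{2m-3}$). The key observation is that this lemma shows every term with index $\geq 4$ satisfies the \emph{single} linear recurrence $J_n = (ab+4)J_{n-2} - 4J_{n-4}$, with characteristic polynomial $1 - (ab+4)x^2 + 4x^4$ appearing in the denominator of the claimed formula. So the natural approach is: multiply $G(x)$ by $1 - (ab+4)x^2 + 4x^4$ and show that all coefficients of $x^n$ for $n \geq 4$ vanish, leaving only a degree-$3$ polynomial in the numerator.

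Concretely, I would write
\begin{equation*}
\bigl(1-(ab+4)x^2+4x^4\bigr)G(x) = \sum_{m=0}^{\infty} J_m x^m - (ab+4)\sum_{m=0}^{\infty} J_m x^{m+2} + 4\sum_{m=0}^{\infty} J_m x^{m+4},
\end{equation*}
then collect coefficients of like powers of $x$. For $n \geq 4$ the coefficient of $x^n$ is exactly $J_n - (ab+4)J_{n-2} + 4J_{n-4}$, which is zero by the Lemma (applied in the even case $n = 2m$ and the odd case $n = 2m+1$). For $n = 0,1,2,3$ the coefficient is respectively $J_0$, $J_1$, $J_2 - (ab+4)J_0$, and $J_3 - (ab+4)J_1$. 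It then remains to simplify $J_2 - (ab+4)J_0$ and $J_3 - (ab+4)J_1$ into the stated forms $aJ_1 - (ab+2)J_0$ and $2bJ_0 - 2J_1$. This is where I expect the only real (though still routine) work: using the definition $J_2 = aJ_1 + 2J_0$ gives $J_2 - (ab+4)J_0 = aJ_1 + 2J_0 - (ab+4)J_0 = aJ_1 - (ab+2)J_0$ immediately; and using $J_3 = bJ_2 + 2J_1 = b(aJ_1+2J_0) + 2J_1 = (ab+2)J_1 + 2bJ_0$ gives $J_3 - (ab+4)J_1 = (ab+2)J_1 + 2bJ_0 - (ab+4)J_1 = 2bJ_0 - 2J_1$, as claimed.

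Putting these four coefficients together yields
\begin{equation*}
\bigl(1-(ab+4)x^2+4x^4\bigr)G(x) = J_0 + J_1 x + \bigl[aJ_1 - (ab+2)J_0\bigr]x^2 + \bigl[2bJ_0 - 2J_1\bigr]x^3,
\end{equation*}
and dividing through by $1-(ab+4)x^2+4x^4$ gives the assertion. The main conceptual point — and the step I would highlight — is recognizing that the Lemma collapses the bi-periodic recurrence into a single fourth-order constant-coefficient recurrence valid for all $n\geq 4$, so that the generating-function denominator is forced and the numerator is just a finite "boundary" correction from the first few indices; the rest is bookkeeping. One caveat worth stating explicitly in the proof is that this is an identity of formal power series (entries are rational functions of $a,b$), so no convergence argument is needed; alternatively one notes it holds for $|x|$ small enough given fixed nonzero $a,b$.
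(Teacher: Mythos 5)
Your argument is correct and is essentially the paper's own (first) proof: both multiply the series by $1-(ab+4)x^{2}+4x^{4}$, invoke the lemma $J_{n}=(ab+4)J_{n-2}-4J_{n-4}$ to annihilate every coefficient of index at least four, and then reduce the four boundary coefficients via $J_{2}=aJ_{1}+2J_{0}$ and $J_{3}=bJ_{2}+2J_{1}$ to get the numerator $J_{0}+J_{1}x+\left[ aJ_{1}-(ab+2)J_{0}\right] x^{2}+\left[ 2bJ_{0}-2J_{1}\right] x^{3}$. The only cosmetic difference is that the paper performs the cancellation separately on the even and odd subseries $\sum J_{2m}x^{2m}$ and $\sum J_{2m+1}x^{2m+1}$ before recombining, whereas you treat the full series in a single pass, handling parity only when citing the lemma.
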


which is expressed in component form as

\begin{equation*}
\underset{m=0}{\overset{\infty }{\sum }}J_{m}x^{m}=\frac{1}{%
1-(ab+4)x^{2}+4x^{4}}\left( 
\begin{array}{cc}
1+bx-2x^{2} & 2\frac{b}{a}x+2bx^{2}-4\frac{b}{a}x^{3} \\ 
x+ax^{2}-2x^{3} & 1-\left( ab+2\right) x^{2}+2bx^{3}%
\end{array}%
\right) .
\end{equation*}

\begin{proof}
We divide the series into two parts%
\begin{equation*}
J(x)=\underset{m=0}{\overset{\infty }{\sum }}J_{m}x^{m}=\underset{m=0}{%
\overset{\infty }{\sum }}J_{2m}x^{2m}+\underset{m=0}{\overset{\infty }{\sum }%
}J_{2m+1}x^{2m+1}.
\end{equation*}%
We simplify the even part of the above series as follows%
\begin{equation*}
J_{0}(x)=\underset{m=0}{\overset{\infty }{\sum }}%
J_{2m}x^{2m}=J_{0}+J_{2}x^{2}+\underset{m=2}{\overset{\infty }{\sum }}%
J_{2m}x^{2m}
\end{equation*}%
By multiplying through by $(ab+4)x^{2}$ and $4x^{4}$ respectively, we have%
\begin{equation*}
(ab+4)x^{2}J_{0}(x)=(ab+4)J_{0}x^{2}+(ab+4)\underset{m=2}{\overset{\infty }{%
\sum }}J_{2m-2}x^{2m}
\end{equation*}%
and%
\begin{equation*}
4x^{4}J_{0}(x)=\underset{m=2}{\overset{\infty }{4\sum }}J_{2m-4}x^{2m}.
\end{equation*}%
Hence it follows that,%
\begin{eqnarray*}
\left[ 1-(ab+4)x^{2}+4x^{4}\right] J_{0}(x)
&=&J_{0}+J_{2}x^{2}-(ab+4)J_{0}x^{2} \\
&&+\underset{m=2}{\overset{\infty }{\sum }}\left[
J_{2m}-(ab+4)J_{2m-2}+4J_{2m-4}\right] x^{2m}
\end{eqnarray*}%
By using Lemma 1, we obtained that;%
\begin{equation*}
J_{0}(x)=\frac{J_{0}+J_{2}x^{2}-(ab+4)J_{0}x^{2}}{1-(ab+4)x^{2}+4x^{4}}.
\end{equation*}%
Similarly, the odd part of the above series is simplified as follows%
\begin{equation*}
J_{1}(x)=\underset{m=0}{\overset{\infty }{\sum }}%
J_{2m+1}x^{2m+1}=J_{1}x+J_{3}x^{3}+\underset{m=2}{\overset{\infty }{\sum }}%
J_{2m+1}x^{2m+1}
\end{equation*}%
By multiplying through by $(ab+4)x^{2}$ and $4x^{4}$ respectively, we obtain%
\begin{equation*}
(ab+4)x^{2}J_{1}(x)=(ab+4)J_{1}\left( a,b\right) x^{3}+(ab+4)\underset{m=2}{%
\overset{\infty }{\sum }}J_{2m-1}\left( a,b\right) x^{2m+1}.
\end{equation*}%
and%
\begin{equation*}
4x^{4}J_{1}(x)=\underset{m=2}{\overset{\infty }{4\sum }}J_{2m-3}x^{2m+1}.
\end{equation*}%
Hence it follows that,%
\begin{eqnarray*}
\left[ 1-(ab+4)x^{2}+4x^{4}\right] J_{1}(x)
&=&J_{1}x+J_{3}x^{3}-(ab+4)J_{1}x^{3} \\
&&\underset{m=2}{\overset{\infty }{+\sum }}\left[
J_{2m+1}-(ab+4)J_{2m-1}+4J_{2m-3}\right] x^{2m+1}
\end{eqnarray*}%
By using Lemma 1, we obtained that;%
\begin{equation*}
J_{1}(x)=\frac{J_{1}x+J_{3}x^{3}-(ab+4)J_{1}x^{3}}{1-(ab+4)x^{2}+4x^{4}}.
\end{equation*}%
By combining the two results $\ \left[ J(x)=J_{0}(x)+J_{1}(x)\right] ,$ we
have%
\begin{equation*}
J(x)=\frac{%
J_{0}+J_{2}x^{2}-(ab+4)J_{0}x^{2}+J_{1}x+J_{3}x^{3}-(ab+4)J_{1}x^{3}}{%
1-(ab+4)x^{2}+4x^{4}}.
\end{equation*}%
which can be simplified using definition $\left[ 1\right] $ as 
\begin{equation*}
J(x)=\frac{J_{0}+J_{1}x+\left[ aJ_{1}-\left( ab+2\right) J_{0}\right] x^{2}+%
\left[ 2bJ_{0}-2J_{1}\right] x^{3}}{1-(ab+4)x^{2}+4x^{4}}.
\end{equation*}%
By substituting the matrix components of $J_{0}$ and $J_{1}$ as given in
definition $\left[ 1\right] $ and simplifying, we obtain%
\begin{equation*}
J(x)=\frac{1}{1-(ab+4)x^{2}+4x^{4}}\left( 
\begin{array}{cc}
1+bx-2x^{2} & 2\frac{b}{a}x+2bx^{2}-4\frac{b}{a}x^{3} \\ 
x+ax^{2}-2x^{3} & 1-\left( ab+2\right) x^{2}+2bx^{3}%
\end{array}%
\right) .
\end{equation*}%
which completes the proof.
\end{proof}

We would like to show another proof of this theorem.

\textbf{(2}) The generating function for $J(x)$ is represented in power
series by%
\begin{equation*}
J(x)=\underset{m=0}{\overset{\infty }{\sum }}%
J_{m}x^{m}=J_{0}+J_{1}x+....+J_{k}x^{k}+...
\end{equation*}%
By multiplying through this series by $bx$ and $2x^{2}$ respectively, we
obtain%
\begin{equation*}
bxJ(x)=\underset{m=0}{\overset{\infty }{b\sum }}J_{m}x^{m+1}=\underset{m=1}{%
\overset{\infty }{b\sum }}J_{m-1}x^{m}
\end{equation*}%
and,%
\begin{equation*}
2x^{2}J(x)=\underset{m=0}{\overset{\infty }{2\sum }}J_{m}x^{m+2}=\underset{%
m=2}{\overset{\infty }{2\sum }}J_{m-2}x^{m}.
\end{equation*}%
therefore we have%
\begin{eqnarray*}
(1-bx-2x^{2})J(x) &=&J_{0}+xJ_{1}+bxJ_{0} \\
&&+\underset{m=2}{\overset{\infty }{\sum }}(J_{m}-bJ_{m-1}-2J_{m-2})x^{m} \\
&=&J_{0}+xJ_{1}+bxJ_{0} \\
&&+\underset{m=1}{\overset{\infty }{\sum }}(J_{2m}-bJ_{2m-1}-2J_{2m-2})x^{2m}
\end{eqnarray*}%
\qquad \qquad\ From Lemma 1, $J_{2m}=aJ_{2m-1}+2J_{2m-2},$ therefore%
\begin{equation*}
(1-bx-2x^{2})J(x)=J_{0}+xJ_{1}+bxJ_{0}+\underset{m=1}{\overset{\infty }{\sum 
}}(a-b)J_{2m-1}x^{2m}.
\end{equation*}%
\begin{equation*}
(1-bx-2x^{2})J(x)=J_{0}+xJ_{1}+bxJ_{0}+(a-b)x\underset{m=1}{\overset{\infty }%
{\sum }}J_{2m-1}x^{2m-1}.
\end{equation*}%
Now lets define $j(x)$ as 
\begin{equation*}
j(x)=\underset{m=1}{\overset{\infty }{\sum }}J_{2m-1}x^{2m-1}.
\end{equation*}%
Simplying $j(x)$ in the same way as above and using lemma 1 gives;%
\begin{eqnarray*}
(1-(ab+4)x^{2}+4x^{4})j(x) &=&\underset{m=1}{\overset{\infty }{\sum }}%
J_{2m-1}x^{2m-1}-(ab+4)\underset{m=2}{\overset{\infty }{\sum }}%
J_{2m-3}x^{2m-1} \\
&&+4\underset{m=3}{\overset{\infty }{\sum }}J_{2m-5}x^{2m-1} \\
&=&(J_{1}x+J_{3}x^{3})-(ab+4)J_{1}x^{3} \\
&&+\underset{m=3}{\overset{\infty }{\sum }}%
(J_{2m-1}-(ab+4)J_{2m-3}+4J_{2m-5})x^{2m-1} \\
&=&J_{1}x+J_{3}x^{3}-(ab+4)J_{1}x^{3}+0.
\end{eqnarray*}%
\begin{equation*}
j(x)=\frac{J_{1}x+J_{3}x^{3}-(ab+4)J_{1}x^{3}}{1-(ab+4)x^{2}+4x^{4}}
\end{equation*}

Plugging $j(x)$ into $J(x)$ above gives 
\begin{equation*}
(1-bx-2x^{2})J(x)=J_{0}+xJ_{1}+bxJ_{0}+(a-b)x\left( \frac{%
J_{1}x+J_{3}x^{3}-(ab+4)J_{1}x^{3}}{1-(ab+4)x^{2}+4x^{4}}\right)
\end{equation*}

Simplifying this using the basic rules and properties of algebra, we obtain 
\begin{equation*}
J(x)=\frac{%
J_{0}+J_{2}x^{2}-(ab+4)J_{0}x^{2}+J_{1}x+J_{3}x^{3}-(ab+4)J_{1}x^{3}}{%
1-(ab+4)x^{2}+4x^{4}}.
\end{equation*}

which simplifies as 
\begin{equation*}
J(x)=\frac{J_{0}+J_{1}x+\left[ aJ_{1}-\left( ab+2\right) J_{0}\right] x^{2}+%
\left[ 2bJ_{0}-2J_{1}\right] x^{3}}{1-(ab+4)x^{2}+4x^{4}}.
\end{equation*}

Similarly the component form is obtained as.%
\begin{equation*}
J(x)=\frac{1}{1-(ab+4)x^{2}+4x^{4}}\left( 
\begin{array}{cc}
1+bx-2x^{2} & 2\frac{b}{a}x+2bx^{2}-4\frac{b}{a}x^{3} \\ 
x+ax^{2}-2x^{3} & 1-\left( ab+2\right) x^{2}+2bx^{3}%
\end{array}%
\right) .
\end{equation*}

which completes the proof.

\begin{theorem}
$\left( BINET\text{ }FORMULA\right) $

For every $n$ belonging to the set of natural numbers, the Binet formula for
the bi-periodic Jacobsthal matrix sequence is given by. 
\begin{equation*}
J_{n}=A\left( \alpha ^{n}-\beta ^{n}\right) +B\left( \alpha ^{2\left\lfloor 
\frac{n}{2}\right\rfloor +2}-\beta ^{2\left\lfloor \frac{n}{2}\right\rfloor
+2}\right) .
\end{equation*}
\end{theorem}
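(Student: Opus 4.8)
The plan is to convert the parity-alternating recurrence into an ordinary constant-coefficient recurrence on the even- and odd-indexed subsequences (this is exactly what Lemma 1 provides), solve each of those at the level of matrices, and then reassemble the two pieces into the single closed formula claimed in the statement. Throughout, $\alpha$ and $\beta$ denote the roots of $x^{2}-abx-2ab=0$, so that $\alpha+\beta=ab$ and $\alpha\beta=-2ab$.

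First I would record the needed facts about the roots. From $\alpha+\beta=ab$ and $\alpha\beta=-2ab$ we get $\alpha^{2}+\beta^{2}=(ab)^{2}+4ab=ab(ab+4)$ and $\alpha^{2}\beta^{2}=4a^{2}b^{2}$, so the numbers $\gamma:=\alpha^{2}/(ab)$ and $\delta:=\beta^{2}/(ab)$ satisfy $\gamma+\delta=ab+4$ and $\gamma\delta=4$; that is, $\gamma$ and $\delta$ are precisely the roots of $t^{2}-(ab+4)t+4=0$, the characteristic equation of both recurrences in Lemma 1. (Here I quietly assume $a,b$ are such that $\alpha\neq\pm\beta$, as the paper implicitly does.) Next, by Lemma 1 the matrix sequences $\left(J_{2m}\right)_{m\ge 0}$ and $\left(J_{2m+1}\right)_{m\ge 0}$ each satisfy the scalar-coefficient recurrence $X_{m}=(ab+4)X_{m-1}-4X_{m-2}$, so the ordinary solution theory for linear recurrences applies entrywise and produces constant matrices $P,Q,R,S$ with $J_{2m}=P\gamma^{m}+Q\delta^{m}$ and $J_{2m+1}=R\gamma^{m}+S\delta^{m}$. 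I would determine $P,Q,R,S$ from the initial data $J_{0}=I$, $J_{2}=aJ_{1}+2J_{0}$, $J_{1}$, $J_{3}=bJ_{2}+2J_{1}$ by solving the $2\times 2$ matrix systems $P+Q=I$, $\gamma P+\delta Q=J_{2}$ and $R+S=J_{1}$, $\gamma R+\delta S=J_{3}$, which uses only $\gamma-\delta\neq 0$.

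Finally I would check that the expression $A\left(\alpha^{n}-\beta^{n}\right)+B\left(\alpha^{2\left\lfloor\frac{n}{2}\right\rfloor+2}-\beta^{2\left\lfloor\frac{n}{2}\right\rfloor+2}\right)$, with $A$ and $B$ the coefficient matrices given in the statement, reproduces the two Binet forms above. For $n=2m$ one has $\alpha^{n}=(ab)^{m}\gamma^{m}$ and $\alpha^{2\left\lfloor n/2\right\rfloor+2}=(ab)^{m+1}\gamma^{m+1}$ (likewise for $\beta,\delta$), so the factor $(ab)^{\left\lfloor n/2\right\rfloor}$ buried in the denominators of $A$ and $B$ cancels and the expression collapses to a combination of $\gamma^{m}$ and $\delta^{m}$ which must be matched against $P\gamma^{m}+Q\delta^{m}$; for $n=2m+1$ one similarly collapses it against $R\gamma^{m}+S\delta^{m}$. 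This matching is then a finite computation that pins $A$ and $B$ down as explicit combinations of $J_{0},J_{1}$ and $\alpha,\beta$, and simultaneously verifies the formula.

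The main obstacle is precisely this last step: the coefficient matrices $A$ and $B$ are genuinely parity-dependent (the exponents $\varepsilon(n)$ and $1-\varepsilon(n)$ appearing in the analogous Fibonacci formula are essential, not cosmetic), so one cannot run a single clean induction directly on the bi-periodic recurrence; the even case and the odd case must be handled separately, with care taken that the shared term $B\left(\alpha^{2\left\lfloor n/2\right\rfloor+2}-\beta^{2\left\lfloor n/2\right\rfloor+2}\right)$ stays consistent across the parity boundary $n\mapsto n+1$. As an alternative that sidesteps solving the matrix systems by hand, one may start instead from the generating function already established: its denominator factors as $\left(1-\gamma x^{2}\right)\left(1-\delta x^{2}\right)$, and splitting the matrix numerator into its even-degree and odd-degree parts and performing a partial-fraction decomposition in $x^{2}$ lets one read off the coefficient of $x^{n}$ directly, again yielding the claimed formula. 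I would present the Lemma 1 argument as the main proof and note the generating-function route as a shortcut.
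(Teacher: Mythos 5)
Your proposal is sound, but your main argument takes a genuinely different route from the paper's. The paper derives the Binet formula from the generating function of the preceding theorem: it splits $\Phi(x)/\left(1-(ab+4)x^{2}+4x^{4}\right)$ by partial fractions, expands each piece in a Maclaurin series, separates the even- and odd-degree parts, simplifies with the identities $(\alpha+2)(\beta+2)=4$, $\alpha+2=\alpha^{2}/(ab)$, $ab=\alpha+\beta$, and then recombines the two parities — essentially the ``alternative'' you only mention at the end. Your main route instead solves the constant-coefficient recurrence of Lemma 1 directly on the even- and odd-indexed subsequences, and it does close: $\gamma=\alpha^{2}/(ab)$ and $\delta=\beta^{2}/(ab)$ indeed satisfy $\gamma+\delta=ab+4$, $\gamma\delta=4$, and moreover $\gamma-\delta=\alpha-\beta$, so the final matching is a short check — for $n=2m$ the claimed expression has $\gamma^{m}$-coefficient $\left(aJ_{1}-2J_{0}-abJ_{0}+\gamma J_{0}\right)/(\alpha-\beta)$, and using $\gamma-\delta=\alpha-\beta$, $\gamma+\delta=ab+4$, and $\alpha\gamma-\beta\delta=(\alpha-\beta)(ab+2)$ one recovers exactly $P+Q=J_{0}$, $\gamma P+\delta Q=aJ_{1}+2J_{0}=J_{2}$ and $R+S=J_{1}$, $\gamma R+\delta S=(ab+2)J_{1}+2bJ_{0}=J_{3}$, which by uniqueness of the Binet coefficients (valid since $\gamma\neq\delta$, i.e.\ $ab\neq -8$, an assumption you rightly flag) proves the theorem. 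What your route buys is economy and transparency: no partial-fraction decomposition or series bookkeeping, just two small linear systems and three root identities, with the auxiliary quadratic $t^{2}-(ab+4)t+4$ playing its natural role. What the paper's route buys is that the exact parity-dependent shape of $A$ and $B$ is produced by the computation itself rather than verified against a known target, at the cost of considerably heavier manipulation.
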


Where

\begin{eqnarray*}
A &=&\frac{\left( J_{1}-bJ_{0}\right) ^{\epsilon (n)}\left(
aJ_{1}-2J_{0}-abJ_{0}\right) ^{1-\epsilon (n)}}{\left( ab\right)
^{\left\lfloor \frac{n}{2}\right\rfloor }\left( \alpha -\beta \right) }\text{
\ and } \\
\text{\ }B &=&\frac{b^{\epsilon (n)}J_{0}}{\left( ab\right) ^{\left\lfloor 
\frac{n}{2}\right\rfloor +1}\left( \alpha -\beta \right) }
\end{eqnarray*}

The matrices A and B are expressed in component form as

\begin{eqnarray*}
A &=&\frac{1}{\left( ab\right) ^{\left\lfloor \frac{n}{2}\right\rfloor
}\left( \alpha -\beta \right) }\left\{ \left( 
\begin{array}{cc}
0 & 2\frac{b}{a} \\ 
1 & -b%
\end{array}%
\right) ^{\epsilon (n)}\left( 
\begin{array}{cc}
-2 & 2b \\ 
a & -2-ab%
\end{array}%
\right) ^{1-\epsilon (n)}\right\} \text{ } \\
\text{and }B &=&\frac{b^{\epsilon (n)}}{\left( ab\right) ^{\left\lfloor 
\frac{n}{2}\right\rfloor +1}\left( \alpha -\beta \right) }\left( 
\begin{array}{cc}
1 & 0 \\ 
0 & 1%
\end{array}%
\right)
\end{eqnarray*}

\begin{proof}
\begin{equation*}
\Phi \left( x\right) =J_{0}+J_{1}x+\left[ aJ_{1}-\left( ab+2\right) J_{0}%
\right] x^{2}+\left[ 2bJ_{0}-2J_{1}\right] x^{3}
\end{equation*}
\end{proof}

Using partial fraction decomposition, we split $J(x)$ as%
\begin{eqnarray*}
J(x) &=&\frac{\Phi \left( x\right) }{1-(ab+4)x^{2}+4x^{4}} \\
&=&\frac{1}{4}\left[ \frac{Ax+B}{\left( x^{2}-\frac{\alpha +2}{4}\right) }+%
\frac{Cx+D}{\left( x^{2}-\frac{\beta +2}{4}\right) }\right]
\end{eqnarray*}
By solving for the constants $A,B,C$ and $D$ above, we express $J(x)$ in
partial fraction as%
\begin{equation*}
\frac{1}{4\left( \alpha -\beta \right) }\left[ \frac{\left\{ 
\begin{array}{c}
x\left\{ 2\alpha \left[ bJ_{0}-J_{1}\right] +4bJ_{0}\right\} \\ 
+\alpha \left\{ aJ_{1}-2J_{0}-abJ_{0}\right\} \\ 
+2aJ_{1}-2abJ_{0}%
\end{array}%
\right\} }{\left( x^{2}-\frac{\alpha +2}{4}\right) }+\frac{\left\{ 
\begin{array}{c}
x\left\{ 2\beta \left[ J_{1}-bJ_{0}\right] -4bJ_{0}\right\} \\ 
+\beta \left\{ abJ_{0}\left( a,b\right) +2J_{0}\left( a,b\right)
-aJ_{1}\right\} \\ 
+2abJ_{0}-2aJ_{1}%
\end{array}%
\right\} }{\left( x^{2}-\frac{\beta +2}{4}\right) }\right]
\end{equation*}%
The Maclaurin series expansion of the function $\frac{Ax+B}{x^{2}-C}$ is
expressed in the form 
\begin{equation*}
\frac{Ax+B}{x^{2}-C}=-\underset{n=0}{\overset{\infty }{\sum }}%
AC^{-n-1}x^{2n+1}-\underset{n=0}{\overset{\infty }{\sum }}BC^{-n-1}x^{2n}
\end{equation*}%
Hence $J(x)$ can be expanded and simplified as%
\begin{equation*}
J(x)=\frac{1}{4\left( \alpha -\beta \right) }\left[ 
\begin{array}{c}
-\underset{n=0}{\overset{\infty }{\sum }}\left\{ 2\alpha
bJ_{0}-J_{1}+4bJ_{0}\right\} \left( \frac{\alpha +2}{4}\right)
^{-n-1}x^{2n+1} \\ 
-\underset{n=0}{\overset{\infty }{\sum }}\left\{ 
\begin{array}{c}
\alpha \left[ aJ_{1}-2J_{0}-abJ_{0}\right] \\ 
+2aJ_{1}-2abJ_{0}%
\end{array}%
\right\} \left( \frac{\alpha +2}{4}\right) ^{-n-1}x^{2n}+ \\ 
-\underset{n=0}{\overset{\infty }{\sum }}\left\{ 
\begin{array}{c}
2\beta \left[ J_{1}-bJ_{0}\right] \\ 
-4bJ_{0}%
\end{array}%
\right\} \left( \frac{\beta +2}{4}\right) ^{-n-1}x^{2n+1} \\ 
-\underset{n=0}{\overset{\infty }{\sum }}\left\{ 
\begin{array}{c}
\beta \left\{ 
\begin{array}{c}
abJ_{0}+2J_{0} \\ 
-aJ_{1}%
\end{array}%
\right\} \\ 
+2abJ_{0}-2aJ_{1}%
\end{array}%
\right\} \left( \frac{\beta +2}{4}\right) ^{-n-1}x^{2n}+%
\end{array}%
\right]
\end{equation*}%
We obtain the even part of $J(x)$ as%
\begin{equation*}
\frac{-1}{4\left( \alpha -\beta \right) }\underset{n=0}{\overset{\infty }{%
\sum }}\left[ 
\begin{array}{c}
\left\{ 
\begin{array}{c}
\alpha \left[ aJ_{1}-2J_{0}-abJ_{0}\right] \\ 
+2aJ_{1}-2abJ_{0}%
\end{array}%
\right\} \frac{4^{n+1}}{\left( \alpha +2\right) ^{n+1}} \\ 
+\left\{ 
\begin{array}{c}
\beta \left[ abJ_{0}+2J_{0}-aJ_{1}\right] \\ 
+2abJ_{0}-2aJ_{1}%
\end{array}%
\right\} \frac{4^{n+1}}{\left( \beta +2\right) ^{n+1}}%
\end{array}%
\right] x^{2n}
\end{equation*}

Which can be simplifed as%
\begin{equation*}
\frac{-4^{n}}{\left( \alpha -\beta \right) }\underset{n=0}{\overset{\infty }{%
\sum }}\left[ \frac{\left( \beta +2\right) ^{n+1}\left\{ 
\begin{array}{c}
\alpha \left[ aJ_{1}-2J_{0}-abJ_{0}\right] \\ 
+2aJ_{1}-2abJ_{0}%
\end{array}%
\right\} +\left( \alpha +2\right) ^{n+1}\left\{ 
\begin{array}{c}
\beta \left\{ abJ_{0}+2J_{0}-aJ_{1}\right\} \\ 
+2abJ_{0}-2aJ_{1}%
\end{array}%
\right\} }{\left( \alpha +2\right) ^{n+1}\left( \beta +2\right) ^{n+1}}%
\right] x^{2n}
\end{equation*}%
From the identity that $\left( \alpha +2\right) \left( \beta +2\right) =4,$%
we have%
\begin{equation*}
\frac{1}{4\left( \alpha -\beta \right) }\underset{n=0}{\overset{\infty }{%
\sum }}\left[ \left\{ 
\begin{array}{c}
2\beta \left( \beta +2\right) ^{n}\left[ aJ_{1}-2J_{0}-abJ_{0}\right] \\ 
-\left( \beta +2\right) ^{n+1}\left[ 2aJ_{1}-2abJ_{0}\right]%
\end{array}%
\right\} +\left\{ 
\begin{array}{c}
2\alpha \left( \alpha +2\right) ^{n}\left\{ abJ_{0}+2J_{0}-aJ_{1}\right\} \\ 
-\left( \alpha +2\right) ^{n+1}\left[ 2abJ_{0}-2aJ_{1}\right]%
\end{array}%
\right\} \right] x^{2n}
\end{equation*}%
by using the identity $\left( \alpha +2\right) =\frac{\alpha ^{2}}{ab},$ we
get%
\begin{equation*}
\frac{2}{4\left( \alpha -\beta \right) }\underset{n=0}{\overset{\infty }{%
\sum }}\left( \frac{1}{ab}\right) ^{n+1}\left\{ 
\begin{array}{c}
\left[ aJ_{1}-2J_{0}-abJ_{0}\right] \left[ ab\left( \beta ^{2n+1}-\alpha
^{2n+1}\right) \right] \\ 
+\left[ aJ_{1}-abJ_{0}\right] \left[ \alpha ^{2n+2}-\beta ^{2n+2}\right]%
\end{array}%
\right\} x^{2n}
\end{equation*}%
Also, making use of the identity $ab=\alpha +\beta $ gives%
\begin{equation*}
\frac{1}{\left( \alpha -\beta \right) }\underset{n=0}{\overset{\infty }{\sum 
}}\left\{ 
\begin{array}{c}
\left( aJ_{1}-2J_{0}-abJ_{0}\right) \left\{ \frac{\alpha ^{2n}-\beta ^{2n}}{%
\left( ab\right) ^{n}\left( \alpha -\beta \right) }\right\} \\ 
+J_{0}\left\{ \frac{\alpha ^{2n+2}-\beta ^{2n+2}}{\left( ab\right)
^{n+1}\left( \alpha -\beta \right) }\right\}%
\end{array}%
\right\} x^{2n}
\end{equation*}%
In the same way, the odd part of $J(x)$ is obtained as%
\begin{equation*}
\frac{-4^{n+1}}{4\left( \alpha -\beta \right) }\underset{n=0}{\overset{%
\infty }{\sum }}\left[ \left\{ 2\alpha \left[ bJ_{0}-J_{1}\right]
+4bJ_{0}\right\} \frac{4^{n+1}}{\left( \alpha +2\right) ^{n+1}}+\left\{
2\beta \left[ J_{1}-bJ_{0}\right] -4bJ_{0}\right\} \frac{4^{n+1}}{\left(
\beta +2\right) ^{n+1}}\right] x^{2n+1}
\end{equation*}%
which can be simplified as%
\begin{equation*}
\frac{-4^{n+1}}{4\left( \alpha -\beta \right) }\underset{n=0}{\overset{%
\infty }{\sum }}\left[ \frac{\left( \beta +2\right) ^{n+1}\left\{ 2\alpha %
\left[ bJ_{0}-J_{1}\right] +4bJ_{0}\right\} +\left( \alpha +2\right)
^{n+1}\left\{ 2\beta \left[ J_{1}-bJ_{0}\right] -4bJ_{0}\right\} }{\left(
\alpha +2\right) ^{n+1}\left( \beta +2\right) ^{n+1}}\right] x^{2n+1}
\end{equation*}%
$\beta +2=-\frac{\beta }{\alpha },$implies gives%
\begin{equation*}
\frac{-4}{4\left( \alpha -\beta \right) }\underset{n=0}{\overset{\infty }{%
\sum }}\left\{ 
\begin{array}{c}
\left[ bJ_{0}-J_{1}\right] \left[ \alpha \left( \alpha +2\right) ^{n}-\beta
\left( \beta +2\right) ^{n}\right] + \\ 
bJ_{0}\left[ \left( \beta +2\right) ^{n+1}-\left( \alpha +2\right) ^{n+1}%
\right]%
\end{array}%
\right\} x^{2n+1}
\end{equation*}%
with $\left( \alpha +2\right) =\frac{\alpha ^{2}}{ab}$, we simplify the
above expression as%
\begin{equation*}
\frac{-1}{\left( \alpha -\beta \right) }\underset{n=0}{\overset{\infty }{%
\sum }}\left( \frac{1}{ab}\right) ^{n+1}\left\{ 
\begin{array}{c}
ab\left[ bJ_{0}-J_{1}\right] (\alpha ^{2n+1}-\beta ^{2n+1}) \\ 
-bJ_{0}(\alpha ^{2n+2}-\beta ^{2n+2})%
\end{array}%
\right\} x^{2n+1}
\end{equation*}%
This can be further expanded and simplified as%
\begin{equation*}
\underset{n=0}{\overset{\infty }{\sum }}\left\{ \left( J_{1}-bJ_{0}\right) %
\left[ \frac{\alpha ^{2n+1}-\beta ^{2n+1}}{\left( ab\right) ^{n}\left(
\alpha -\beta \right) }\right] +bJ_{0}\left[ \frac{\alpha ^{2n+2}-\beta
^{2n+2}}{\left( ab\right) ^{n+1}\left( \alpha -\beta \right) }\right]
\right\} x^{2n+1}
\end{equation*}%
Now the even and the odd expressions obtained can be condensed by means of
the parity function $[...]$ as%
\begin{equation*}
J(x)=\underset{n=0}{\overset{\infty }{\sum }}\left\{ 
\begin{array}{c}
\left( J_{1}-bJ_{0}\right) ^{\epsilon (n)}\left(
aJ_{1}-2J_{0}-abJ_{0}\right) ^{1-\epsilon (n)}\left\{ \frac{\alpha
^{n}-\beta ^{n}}{\left( ab\right) ^{\left\lfloor \frac{n}{2}\right\rfloor
}\left( \alpha -\beta \right) }\right\} \\ 
+b^{\epsilon (n)}J_{0}\left\{ \frac{\alpha ^{2\left( \left\lfloor \frac{n}{2}%
\right\rfloor +1\right) }-\beta ^{2\left( \left\lfloor \frac{n}{2}%
\right\rfloor +1\right) }}{\left( ab\right) ^{\left\lfloor \frac{n}{2}%
\right\rfloor +1}\left( \alpha -\beta \right) }\right\}%
\end{array}%
\right\} x^{n}
\end{equation*}%
Therefore compared with $J(x)=\underset{n=1}{\overset{\infty }{\sum }}%
J_{n}x^{n}$ we obtain our Binet formula as%
\begin{equation*}
J_{n}=A\left( \alpha ^{n}-\beta ^{n}\right) +B\left( \alpha ^{2\left\lfloor 
\frac{n}{2}\right\rfloor +2}-\beta ^{2\left\lfloor \frac{n}{2}\right\rfloor
+2}\right)
\end{equation*}%
where%
\begin{equation*}
A=\frac{\left( J_{1}-bJ_{0}\right) ^{\epsilon (n)}\left(
aJ_{1}-2J_{0}-abJ_{0}\right) ^{1-\epsilon (n)}}{\left( ab\right)
^{\left\lfloor \frac{n}{2}\right\rfloor }\left( \alpha -\beta \right) }\text{
\ \ \ and \ }B=\text{\ }\frac{b^{\epsilon (n)}J_{0}}{\left( ab\right)
^{\left\lfloor \frac{n}{2}\right\rfloor +1}\left( \alpha -\beta \right) }.
\end{equation*}

\begin{theorem}
$\left( SUMMATION\text{ }FORMULA\right) $

For any positive integer n, we have%
\begin{equation*}
\underset{k=0}{\overset{n-1}{\sum }}J_{k}=\frac{J_{n}(1-a^{\xi (n)}b^{1-\xi
(n)})+2J_{n-1}(1-a^{1-\xi (n)}b^{\xi (n)})+J_{1}(a-1)+J_{0}(2b-ab-1)}{1-ab}.
\end{equation*}
\end{theorem}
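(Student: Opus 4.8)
The plan is to prove the identity by induction on $n$, using only the defining recurrence of $J_{n}$ together with two elementary facts about the parity function $\xi$ (the same parity function used earlier): that $\xi(n+1)=1-\xi(n)$, and that if we abbreviate
\begin{equation*}
p_{n}=a^{\xi(n)}b^{1-\xi(n)},\qquad q_{n}=a^{1-\xi(n)}b^{\xi(n)},
\end{equation*}
then $\{p_{n},q_{n}\}=\{a,b\}$ and $p_{n}q_{n}=ab$. The recurrence of the definition can then be written uniformly as $J_{m}=q_{m}J_{m-1}+2J_{m-2}$ for $m\geq 2$ (because $q_{m}=a$ when $m$ is even and $q_{m}=b$ when $m$ is odd), and with this notation the asserted formula reads
\begin{equation*}
\sum_{k=0}^{n-1}J_{k}=\frac{J_{n}(1-p_{n})+2J_{n-1}(1-q_{n})+J_{1}(a-1)+J_{0}(2b-ab-1)}{1-ab},
\end{equation*}
which we prove assuming $ab\neq 1$ so that the right-hand side is defined.

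First I would dispose of the base case $n=1$: the left-hand side is $J_{0}$, and since $\xi(1)=1$ we have $p_{1}=a$, $q_{1}=b$, so the numerator on the right is $J_{1}(1-a)+2J_{0}(1-b)+J_{1}(a-1)+J_{0}(2b-ab-1)$; the two $J_{1}$ terms cancel and the $J_{0}$ terms collapse to $J_{0}(2-2b+2b-ab-1)=J_{0}(1-ab)$, which after division by $1-ab$ is exactly $J_{0}$.

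For the inductive step I would assume the formula for $n$ and add $J_{n}$ to both sides, using $\sum_{k=0}^{n}J_{k}=\sum_{k=0}^{n-1}J_{k}+J_{n}$. After clearing the common denominator $1-ab$ and cancelling the identical terms $J_{1}(a-1)+J_{0}(2b-ab-1)$, the claim reduces to the algebraic identity
\begin{equation*}
J_{n}(1-p_{n})+2J_{n-1}(1-q_{n})+(1-ab)J_{n}=J_{n+1}(1-p_{n+1})+2J_{n}(1-q_{n+1}).
\end{equation*}
Here $\xi(n+1)=1-\xi(n)$ gives $p_{n+1}=q_{n}$ and $q_{n+1}=p_{n}$, while the recurrence gives $J_{n+1}=q_{n+1}J_{n}+2J_{n-1}=p_{n}J_{n}+2J_{n-1}$. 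Expanding the right-hand side as $p_{n}J_{n}(1-q_{n})+2J_{n-1}(1-q_{n})+2J_{n}(1-p_{n})$ and using $p_{n}q_{n}=ab$, the coefficient of $J_{n}$ becomes $p_{n}-p_{n}q_{n}+2-2p_{n}=2-p_{n}-ab$, which equals the left-hand coefficient $(1-p_{n})+(1-ab)=2-p_{n}-ab$; both sides are therefore $J_{n}(2-p_{n}-ab)+2J_{n-1}(1-q_{n})$, finishing the induction.

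Essentially all the work is bookkeeping with the parity function, and the only real point is the uniform rewriting $J_{m}=q_{m}J_{m-1}+2J_{m-2}$ of the two-branch recurrence together with the identity $p_{n}q_{n}=ab$; these are precisely what make the case distinction on the parity of $n$ collapse, so I expect no genuine obstacle beyond careful algebra. An alternative route would split $\sum J_{k}$ into its even- and odd-indexed subsums, telescope each using $J_{2m}=aJ_{2m-1}+2J_{2m-2}$ and $J_{2m+1}=bJ_{2m}+2J_{2m-1}$, and recombine, but this forces one to carry the parity of $n$ explicitly and is messier than the induction above.
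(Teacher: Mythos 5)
Your proof is correct, and it takes a genuinely different route from the paper. The paper proves this summation formula by appealing to the Binet formula of the preceding theorem: it splits $\sum_{k=0}^{n-1}J_{k}$ into even- and odd-indexed parts, writes each term via $\alpha$ and $\beta$, sums the resulting geometric-type series, and then simplifies using the identities among $\alpha$, $\beta$ and $ab$, treating the cases $n$ even and $n$ odd separately before condensing them with the parity function. You instead verify the closed form directly by induction, after the useful normalization $p_{n}=a^{\xi(n)}b^{1-\xi(n)}$, $q_{n}=a^{1-\xi(n)}b^{\xi(n)}$, the uniform recurrence $J_{m}=q_{m}J_{m-1}+2J_{m-2}$, and the relations $p_{n+1}=q_{n}$, $q_{n+1}=p_{n}$, $p_{n}q_{n}=ab$; your base case and the reduction of the inductive step to the coefficient identity $p_{n}-p_{n}q_{n}+2-2p_{n}=(1-p_{n})+(1-ab)$ both check out, and all operations are scalar multiples and sums of matrices, so no commutativity issues arise. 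What each approach buys: your induction is more elementary and considerably shorter, needing only the two-term recurrence and parity bookkeeping rather than the Binet machinery and the $\alpha,\beta$ identities, and it handles both parities at once; the paper's Binet-based computation is heavier but is a derivation rather than a verification (it produces the closed form without knowing it in advance) and the same series technique carries over directly to the weighted sum $\sum_{k=0}^{n-1}J_{k}/x^{k}$ of the following theorem, which the paper dispatches as ``similar.'' One small point worth stating explicitly in your write-up: the uniform recurrence is used in the step as $J_{n+1}=p_{n}J_{n}+2J_{n-1}$, which needs $n\geq 1$, exactly the range covered by your base case $n=1$, and the hypothesis $ab\neq 1$ is implicit in the statement itself.
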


\begin{proof}
If $n$ is even, we have%
\begin{eqnarray*}
\underset{k=0}{\overset{n-1}{\sum }}J_{k} &=&\underset{k=0}{\overset{\frac{%
n-2}{2}}{\sum }}J_{2k}+\underset{k=0}{\overset{\frac{n-2}{2}}{\sum }}J_{2k+1}
\\
&=&\underset{k=0}{\overset{\frac{n-2}{2}}{\sum }}\frac{aJ_{1}-2J_{0}-abJ_{0}%
}{\left( ab\right) ^{k}}\frac{\alpha ^{2k}-\beta ^{2k}}{\alpha -\beta }+%
\underset{k=0}{\overset{\frac{n-2}{2}}{\sum }}\frac{J_{0}}{\left( ab\right)
^{k+1}}\frac{\alpha ^{2k+2}-\beta ^{2k+2}}{\alpha -\beta } \\
&&+\underset{k=0}{\overset{\frac{n-2}{2}}{\sum }}\frac{J_{1}-bJ_{0}}{\left(
ab\right) ^{k}}\frac{\alpha ^{2k+1}-\beta ^{2k+1}}{\alpha -\beta }+\underset{%
k=0}{\overset{\frac{n-2}{2}}{\sum }}\frac{bJ_{0}}{\left( ab\right) ^{k+1}}%
\frac{\alpha ^{2k+2}-\beta ^{2k+2}}{\alpha -\beta }
\end{eqnarray*}%
\begin{eqnarray*}
\underset{k=0}{\overset{n-1}{\sum }}J_{k} &=&\frac{aJ_{1}-2J_{0}-abJ_{0}}{%
\left( ab\right) ^{\frac{n}{2}-1}(\alpha -\beta )}\left[ \frac{\alpha
^{n}-\left( ab\right) ^{\frac{n}{2}}}{(\alpha ^{2}-ab)}-\frac{\beta
^{n}-\left( ab\right) ^{\frac{n}{2}}}{(\beta ^{2}-ab)}\right] \\
&&+\frac{J_{0}}{\left( ab\right) ^{\frac{n}{2}}(\alpha -\beta )}\left[ \frac{%
\alpha ^{n+2}-\alpha ^{2}\left( ab\right) ^{\frac{n}{2}}}{(\alpha ^{2}-ab)}-%
\frac{\beta ^{n+2}-\beta ^{2}\left( ab\right) ^{\frac{n}{2}}}{(\beta ^{2}-ab)%
}\right] \\
&&+\frac{J_{1}-bJ_{0}}{\left( ab\right) ^{\frac{n}{2}-1}(\alpha -\beta )}%
\left[ \frac{\alpha ^{n+1}-\alpha \left( ab\right) ^{\frac{n}{2}}}{(\alpha
^{2}-ab)}-\frac{\beta ^{n+1}-\beta \left( ab\right) ^{\frac{n}{2}}}{(\beta
^{2}-ab)}\right] \\
&&+\frac{bJ_{0}}{\left( ab\right) ^{\frac{n}{2}}(\alpha -\beta )}\left[ 
\frac{\alpha ^{n+2}-\alpha ^{2}\left( ab\right) ^{\frac{n}{2}}}{(\alpha
^{2}-ab)}-\frac{\beta ^{n+2}-\beta ^{2}\left( ab\right) ^{\frac{n}{2}}}{%
(\beta ^{2}-ab)}\right] .
\end{eqnarray*}%
\begin{eqnarray*}
&=&\frac{(aJ_{1}-2J_{0}-abJ_{0})}{\left( ab\right) ^{\frac{n}{2}+1}(\alpha
-\beta )(1-ab)}\left[ 4a^{2}b^{2}(\alpha ^{n-2}-\beta ^{n-2})-ab(\alpha
^{n}-\beta ^{n})+\left( ab\right) ^{\frac{n}{2}}(\alpha ^{2}-\beta ^{2})%
\right] \\
&&+\frac{J_{0}}{\left( ab\right) ^{\frac{n}{2}+2}(\alpha -\beta )(1-ab)}%
\left[ 4a^{2}b^{2}(\alpha ^{n}-\beta ^{n})-ab(\alpha ^{n+2}-\beta
^{n+2})+\left( ab\right) ^{\frac{n}{2}+1}(\alpha ^{2}-\beta ^{2})\right] \\
&&+\frac{J_{1}-bJ_{0}}{\left( ab\right) ^{\frac{n}{2}+1}(\alpha -\beta
)(1-ab)}\left[ 4a^{2}b^{2}(\alpha ^{n-1}-\beta ^{n-1})-ab(\alpha
^{n+1}-\beta ^{n+1})-\left( ab\right) ^{\frac{n}{2}+1}(\alpha -\beta )\right]
\\
&&+\frac{bJ_{0}}{\left( ab\right) ^{\frac{n}{2}+2}(\alpha -\beta )(1-ab)}%
\left[ 4a^{2}b^{2}(\alpha ^{n}-\beta ^{n})-ab(\alpha ^{n+2}-\beta
^{n+2})+\left( ab\right) ^{\frac{n}{2}+1}(\alpha ^{2}-\beta ^{2})\right]
\end{eqnarray*}%
\begin{eqnarray*}
&=&\frac{-J_{n+1}-J_{n}+4J_{n-1}+4J_{n-2}+J_{1}(a-1)+J_{0}(2b-ab-1)}{1-ab} \\
&=&\frac{-J_{n+1}-J_{n}+4J_{n-1}+4J_{n-2}+J_{1}(a-1)+J_{0}(2b-ab-1)}{1-ab} \\
&=&\frac{J_{n}(1-b)+J_{n-1}(2-2a)+J_{1}(a-1)+J_{0}(2b-ab-1)}{1-ab}
\end{eqnarray*}%
similarly if $n$ is odd, we obtain 
\begin{eqnarray*}
\underset{k=0}{\overset{n-1}{\sum }}J_{k} &=&\underset{k=0}{\overset{\frac{%
n-1}{2}}{\sum }}J_{2k}+\underset{k=0}{\overset{\frac{n-3}{2}}{\sum }}J_{2k+1}
\\
&=&\frac{-J_{n+1}-J_{n}+4J_{n-1}+4J_{n-2}+J_{1}(a-1)+J_{0}(2b-ab-1)}{1-ab} \\
&=&\frac{J_{n}(1-a)+J_{n-1}(2-2b)+J_{1}(a-1)+J_{0}(2b-ab-1)}{1-ab}
\end{eqnarray*}%
Hence putting the two results together by means of the parity function gives 
\begin{equation*}
\underset{k=0}{\overset{n-1}{\sum }}J_{k}=\frac{J_{n}(1-a^{\xi (n)}b^{1-\xi
(n)})+2J_{n-1}(1-a^{1-\xi (n)}b^{\xi (n)})+J_{1}(a-1)+J_{0}(2b-ab-1)}{1-ab}.
\end{equation*}
\end{proof}

\begin{theorem}
$\left( SUMMATION\text{ }FORMULA\right) $

For any positive integer n, we have%
\begin{eqnarray*}
\underset{k=0}{\overset{n-1}{\sum }}\frac{J_{k}}{x^{k}} &=&\frac{%
J_{n}(2-x-a^{\xi (n)}b^{1-\xi (n)}x)+2J_{n-1}(2-a^{1-\xi (n)}b^{\xi (n)}-x)}{%
x^{2}-(ab+4)x+4} \\
&&+\frac{x^{2}(J_{1}-bJ_{0})+x(-2J_{1}+3bJ_{0}+aJ_{1}-J_{0}-abJ_{0})}{%
x^{2}-(ab+4)x+4}.
\end{eqnarray*}

\begin{proof}
The proof is obtained is a similar fashion as the above theorem.
\end{proof}
\end{theorem}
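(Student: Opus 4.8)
The plan is to imitate, essentially verbatim, the proof of the preceding summation theorem; the only new ingredient is that the weights $x^{-k}$ must be carried through every step of that argument. An equivalent route would be to write $\sum_{k=0}^{n-1}J_{k}x^{-k}$ as the difference of $J(1/x)$ and the tail $x^{-n}\sum_{k\ge 0}J_{n+k}x^{-k}$, the latter being the generating function of the shifted sequence $(J_{n+k})_{k\ge 0}$, which again obeys the bi-periodic recurrence and whose numerator therefore involves $J_{n}$, $J_{n+1}$ and the parity of $n$; but the Binet route follows the paper's own previous proof most closely, so I would take that one.

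First I would split the sum according to the parity of $n$:
\[
\sum_{k=0}^{n-1}\frac{J_{k}}{x^{k}}=\sum_{k}\frac{J_{2k}}{x^{2k}}+\sum_{k}\frac{J_{2k+1}}{x^{2k+1}},
\]
the inner sums running to index $\tfrac{n-2}{2}$ each when $n$ is even, and to $\tfrac{n-1}{2}$ and $\tfrac{n-3}{2}$ respectively when $n$ is odd. This is the same split used for $\sum_{k=0}^{n-1}J_{k}$, and it is the source of the parity function $\xi(n)$ in the final formula. Into the inner sums I would substitute the even/odd specializations of the Binet formula already extracted in that proof, namely
\[
J_{2k}=\frac{aJ_{1}-2J_{0}-abJ_{0}}{(ab)^{k}}\cdot\frac{\alpha^{2k}-\beta^{2k}}{\alpha-\beta}+\frac{J_{0}}{(ab)^{k+1}}\cdot\frac{\alpha^{2k+2}-\beta^{2k+2}}{\alpha-\beta},
\]
\[
J_{2k+1}=\frac{J_{1}-bJ_{0}}{(ab)^{k}}\cdot\frac{\alpha^{2k+1}-\beta^{2k+1}}{\alpha-\beta}+\frac{bJ_{0}}{(ab)^{k+1}}\cdot\frac{\alpha^{2k+2}-\beta^{2k+2}}{\alpha-\beta}.
\]

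After dividing by the relevant power of $x$, each of the four resulting inner sums is a finite geometric series; using $\alpha^{2}=ab(\alpha+2)$ and $\beta^{2}=ab(\beta+2)$ its common ratio is $(\alpha+2)$ or $(\beta+2)$ divided by a power of $x$, so it is summed by $\sum_{j=0}^{m}r^{j}=(1-r^{m+1})/(1-r)$. I would then collect the $\alpha$-parts and the $\beta$-parts over a common denominator, using $\alpha+\beta=ab$, $\alpha\beta=-2ab$, $(\alpha+2)(\beta+2)=4$ and $(\alpha+2)+(\beta+2)=ab+4$; the denominator that appears is governed by the characteristic polynomial $x^{2}-(ab+4)x+4$ of the even- and odd-indexed subsequences coming from Lemma 1, which is the denominator in the statement. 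The numerator is at this point a combination of constants together with $\alpha^{n}-\beta^{n}$, $\alpha^{n+1}-\beta^{n+1}$, $\alpha^{n+2}-\beta^{n+2}$, each divided by a power of $ab$ and by $\alpha-\beta$; I would recognize the appropriate combinations of these blocks as $J_{n+1}$, $J_{n}$, $J_{n-1}$, $J_{n-2}$ via the Binet formula, and then use the defining recurrence (equivalently Lemma 1) to eliminate $J_{n+1}$ and $J_{n-2}$ in favour of $J_{n}$ and $J_{n-1}$. This last substitution is where the coefficients $a^{\xi(n)}b^{1-\xi(n)}$ (the multiplier in the recurrence producing $J_{n+1}$) and $a^{1-\xi(n)}b^{\xi(n)}$ (the multiplier producing $J_{n}$) enter the formula. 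Merging the even-$n$ and odd-$n$ cases with the parity function then produces the right-hand side of the statement, with the $J_{1}$ and $J_{0}$ terms coming from the $r^{0}$ ends of the four geometric sums.

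The difficulty is organizational rather than conceptual: one must keep straight four geometric sums (two per parity) together with their leading factors $\alpha/x$, $\beta/x$, their upper limits, and the boundary contributions from the $r^{0}$ and $r^{m+1}$ ends of each, and then fold the $\alpha^{n+1}-\beta^{n+1}$ and $\alpha^{n+2}-\beta^{n+2}$ terms correctly back into $J_{n}$ and $J_{n-1}$. No idea beyond the proof of the previous summation theorem is needed; the bookkeeping is merely heavier because of the extra $x^{-k}$ factors.
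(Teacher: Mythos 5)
Your overall strategy is the same one the paper intends (its ``proof'' is only the remark that one proceeds as in the preceding theorem), but the step you pass over -- ``collect the $\alpha$-parts and the $\beta$-parts \dots which is the denominator in the statement'' -- is exactly where the argument breaks, and it cannot be repaired so as to yield the formula as printed. With the weights $x^{-k}$, the even subsum involves $\sum_m \bigl((\alpha+2)/x^{2}\bigr)^{m}$ and likewise for $\beta$ (since $\alpha^{2m}/(ab)^{m}=(\alpha+2)^{m}$), so the common ratio is $(\alpha+2)/x^{2}$, not $(\alpha+2)/x$; clearing denominators therefore produces $\bigl(x^{2}-(\alpha+2)\bigr)\bigl(x^{2}-(\beta+2)\bigr)=x^{4}-(ab+4)x^{2}+4$, not $x^{2}-(ab+4)x+4$. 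Moreover, the upper ends of the four finite geometric series contribute terms of the form $(\alpha+2)^{\lfloor n/2\rfloor}/x^{n-O(1)}$, so after reassembling via the Binet formula the closed form must contain $J_{n}$ and $J_{n-1}$ multiplied by $n$-dependent powers of $x$ (of order $x^{-(n-2)}$ and $x^{-(n-1)}$); this is forced, since the left-hand side itself contains the term $J_{n-1}x^{-(n-1)}$.

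Consequently no amount of bookkeeping can land on the right-hand side as stated: that expression contains no $n$-dependent powers of $x$, and it fails elementary checks. For instance, as $x\to\infty$ the left side tends to $J_{0}=I$ while the right side tends to $J_{1}-bJ_{0}\neq I$; a direct check at $a=b=1$, $n=2$ (denominator $(x-1)(x-4)$) also fails. The identity only collapses correctly at $x=1$, where it reproduces the previous summation theorem -- which is presumably why the discrepancy went unnoticed. So the genuine gap is the final ``recognize and match'' step: you assert the computation produces the stated right-hand side, but carried out honestly it produces a formula with denominator $x^{4}-(ab+4)x^{2}+4$ and with the $J_{n}$, $J_{n-1}$ boundary terms divided by the appropriate powers $x^{n-2}$, $x^{n-1}$. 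A correct write-up must either derive and state that corrected identity, or specialize to $x=1$; as proposed, the argument does not prove the statement given.
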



\begin{thebibliography}{99}
\bibitem{1} A . F. Horadam, "A generalized Fibonacci sequence",\textit{\ }%
Math. Mag., 68 (1961), 455-459.

\bibitem{2} A. H. George, "Some formulae for the Fibonacci sequence with
generalization", Fibonacci Quart.,7 (1969), 113-130.

\bibitem{3} S. P. Pethe, C. N. Phadte, "A generalization of the Fibonacci
sequence", Applications of Fibonacci numbers, Vol. 5 (St. Andrews, 1992),
465-472.

\bibitem{4} A. F. Horadam, Jacobsthal representation numbers, The Fibonacci
Quarterly.,37(2), (1999), 40-54.

\bibitem{5} G. Y. Lee, S. G. Lee, J. S. Kim, H. K. Shin, "The Binet formula
and representations of k-generalized Fibonacci numbers", Fibonacci Quart.,
39(2001), no. 2, 158-164.

\bibitem{6} M. Edson, O. Yayenie, "A new generalization of Fibonacci
sequences and the extended Binet's formula", INTEGERS Electron. J. Comb.
Number Theor., 9 (2009), 639-654.

\bibitem{7} O. Yayenie, "A note on generalized Fibonacci sequence", Appl.
Math. Comput., 217 (2011), 5603-5611.

\bibitem{8} G. Bilgici, "Two generalizations of Lucas sequence", Applied
Mathematics and Computation, 245 (2014), 526-538.

\bibitem{9} T. Koshy,\textit{\ Fibonacci and Lucas Numbers with Applications}%
, John Wiley and Sons Inc., NY (2001).

\bibitem{10} S. Uygun, The (s,t)-Jacobsthal and (s,t)-Jacobsthal Lucas
sequences, Applied Mathematical Sciences, 70(9), (2015), 3467-3476.

\bibitem{11} A. Coskun, N. Yilmaz, N.Taskara, A note on the bi-periodic
Fibonacci and Lucas matrix sequences, arXiv:1604.00766v1 [math.NT] 4 Apr
2016.

\bibitem{12} S. Uygun, E. Owusu, A New Generalization of Jacobsthal Numbers
(Bi-Periodic Jacobsthal Sequences), Journal of Mathematical Analysis 7(5),
(2016), 28-39.

\bibitem{13} A. F. Horadam, Jacobsthal and pell curves, The Fibonacci
Quarterly.,26(1), (1998), 79-83.

\bibitem{14} http://mathworld.wolfram.com/JacobsthalNumber.html.

\bibitem{15} S. Uygun, E. Owusu, A New Generalization of Jacobsthal Lucas
Numbers (Bi-Periodic Jacobsthal Lucas Sequences), submitted

\bibitem{16} A. Coskun, N.Taskara, The matrix sequence in terms of
bi-periodic Fibonacci numbers, arXiv:1603.07487v2 [math.NT] 4 Apr 2016.
\end{thebibliography}
\end{document}